\numberwithin{equation}{section} 
\numberwithin{figure}{section} 
\theoremstyle{plain}
  \theoremstyle{plain}
  \newtheorem*{thm*}{Theorem}
\newtheorem{thm}{Theorem}[section]
  \theoremstyle{plain}
  \newtheorem{cor}[thm]{Corollary}
  \theoremstyle{definition}
  \newtheorem{problem}[thm]{Problem}
  \theoremstyle{plain}
  \newtheorem{prop}[thm]{Proposition}
  \theoremstyle{definition}
  \newtheorem{defn}[thm]{Definition}
  \theoremstyle{plain}
  \newtheorem{lem}[thm]{Lemma}
  \theoremstyle{plain}
  \newtheorem*{lem*}{Lemma}
  \theoremstyle{remark}
  \newtheorem{claim}[thm]{Claim}
\DeclareMathOperator{\id}{id}
\begin{document}

\title{Non expansive directions for $\mathbb{Z}^{2}$ actions}

\author{Michael Hochman}

\address{Department of Mathematics, Princeton University, Princeton, NJ 08544.}

\email{hochman@math.princeton.edu}

\subjclass[2000]{37B05, 37B10, 37B15}

\thanks{This research was partially supported by the NSF under agreement
No. DMS-0635607.}
\begin{abstract}
We show that any direction in the plane occurs as the unique non-expansive
direction of a $\mathbb{Z}^{2}$ action, answering a question of Boyle
and Lind. In the case of rational directions, the subaction obtained
is non-trivial. We also establish that a cellular automaton can have
zero Lyapunov exponents and at the same time act sensitively; and
more generally, for any positive real $\theta$ there is a cellular
automaton acting on an appropriate subshift with $\lambda^{+}=-\lambda^{-}=\theta$.
\end{abstract}
\maketitle

\section{\label{sec:Introduction}Introduction}

Consider a $\mathbb{Z}^{2}$ action $(X,T)$ on a compact metric space.
Let $\ell$ be a line in the plane and let $\ell^{r}$ denote the
set of points within distance $r$ of $\ell$. Then $\ell$ is said
to be an expansive line if there exist $r>0$ and $\delta>0$ such
that, for all $x,y\in X$, \[
d(T^{u}x,T^{u}y)<\delta\mbox{ for all }u\in\ell^{r}\cap\mathbb{Z}^{2}\quad\implies\quad x=y\]
Expansiveness only depends on the direction of the line and not the
line itself, so we may speak of expansive and non-expansive directions.
Note that if $\ell$ contains an integer point $u\in\ell\cap\mathbb{Z}^{2}$
then expansiveness of $\ell$ is equivalent to expansiveness of the
map $T^{u}:X\rightarrow X$, but for $\ell$ with irrational slope
there is no such interpretation.%
\footnote{The first thing one tries is to go to a continuous time analog with
flow $\{\sigma^{t}\}_{t\in\mathbb{R}}$, and set $\varphi=\sigma^{\theta}$;
but this does not work since when one goes back to a discrete action
one loses expansiveness.%
}

Expansive and non-expansive directions were defined by Boyle and Lind
in \cite{BoyleLind97}, where they were used as a tool in the study
of the directional dynamics of an action. Many properties of the dynamics
of subactions $T^{u}$ vary nicely withing  connected components of
the set of expansive directions. For example, within such a component
the entropy of subactions varies linearly. Certain properties are
constant within expansive components: for example, if $T^{u}$ acts
as a shift of finite type then so does $T^{v}$ as long as the directions
$u,v$ are in the same expansive component. 

One of the basic questions that arose in \cite{BoyleLind97} was to
understand what sets can occur as the set of non-expansive directions.
It was shown that this set is closed and, when the phase space is
infinite, non-empty; and furthermore if $C$ is any closed set of
directions of cardinality $|C|\geq2$, then it is the set of non-expansive
directions for some action. 

It has been an open problem for some time to determine which direction
can occur as the unique non-expansive direction in a non-trivial way.
If one begins with an expansive $\mathbb{Z}$-action $(X,T)$ and
extends it formally to the $\mathbb{Z}^{2}$ action $(X,\left\langle T,\id_{X}\right\rangle )$
generated by $T$ and the identity map, then one obtains an action
whose unique non-expansive direction is the vertical one; but the
action in that direction is trivial. One can construct similarly trivial
examples in which an arbitrary rational direction is the only non-expansive
one. However, attempts have not succeeded in producing non-trivial
examples for rational directions (a proposed example in \cite{Madden2000}
turned out to be flawed, see \cite{Boyle2008}), or any examples at
all of actions with a single irrational non-expansive direction.

In this paper we resolve this problem as follows:
\begin{thm*}
\label{thm:main}For every direction $\ell$ in the plane there is
an expansive $\mathbb{Z}^{2}$ action whose unique non-expansive direction
is $\ell$. In the case $\ell$ has rational slope, the corresponding
subaction is non-trivial, i.e. none of its elements act as the identity.\end{thm*}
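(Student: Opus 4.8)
The plan is to realize every direction as a unique non-expansive direction by passing to space-time subshifts of reversible cellular automata, where non-expansiveness is governed by the speed at which information propagates. Recall the dictionary between a reversible CA $F$ on a subshift $Y\subseteq\Sigma^{\mathbb Z}$ and the $\mathbb Z^{2}$ subshift $X$ of its space-time diagrams $c(n,m)=(F^{m}y)(n)$, on which the two shift generators act as the spatial shift $\sigma$ and as $F$. For such $X$ the non-expansive directions are confined to the information ``light cone'' cut out by the Lyapunov exponents $\lambda^{+},\lambda^{-}$, and the cone edges are themselves non-expansive: a direction strictly transverse to and outside the cone is expansive because a thick strip across it meets every site of $Y$ and, $F$ being reversible, the local rule can be inverted to reconstruct $c$, whereas a strip around a cone edge only pins down the dynamics on a bounded band and leaves the behaviour far away undetermined. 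The key observation is that arranging $\lambda^{+}=-\lambda^{-}=\theta$ collapses the two edges into a single line of slope $1/\theta$; since the non-expansive set is non-empty and contained in this degenerate cone, it must equal that one line. As $\theta$ ranges over $\mathbb R$ this realizes every nonzero slope, the case $\theta=0$ gives the vertical direction, and the reflection $n\mapsto-n$ together with the coordinate swap $(n,m)\mapsto(m,n)$ (both in $GL_{2}(\mathbb Z)$, which preserves expansiveness) supply the remaining directions, reaching all of $\mathbb{RP}^{1}$.

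The main work is therefore to construct, for each real $\theta$, a reversible CA $F_{\theta}$ on an appropriate subshift with $\lambda^{+}=-\lambda^{-}=\theta$ that is moreover sensitive. I would arrange that all information drifts rigidly at average speed $\theta$ with no spreading, which forces the degenerate cone, while in the co-moving frame the dynamics is a genuinely non-trivial, zero-speed scrambling map rather than a plain shift. Concretely I would begin with $\theta=0$: a CA that is sensitive yet has both Lyapunov exponents equal to zero, i.e.\ it shuffles the contents of each bounded region in a way depending sensitively on the configuration but never transmits a disturbance across a fixed distance at positive speed. Zero-speed sensitivity is impossible on a full shift, so I would embed a rigid hierarchical/aperiodic scaffolding (a substitutive or Toeplitz structure delimiting regions at every scale) in the underlying subshift and let $F_{\theta}$ act sensitively within the regions of this scaffolding -- which is precisely why the theorem is stated for a CA ``on an appropriate subshift''.

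For rational $\ell$ this sensitivity is exactly what yields the non-triviality clause. The subaction along $\ell$ is generated by a single map of the form $\sigma^{a}F_{\theta}^{b}$, where $(a,b)$ is the primitive integer vector spanning $\ell$; following the drift, this is the zero-speed co-moving map, and a sensitive homeomorphism cannot satisfy $G^{k}=\mathrm{id}$, as that would make it periodic and hence equicontinuous. Thus no non-zero element of the subaction acts as the identity. This also explains why the naive rigid-drift example $F=\sigma^{k}$ fails: there the co-moving dynamics is trivial, so a power of the subaction is the identity. For irrational $\ell$ no such subaction is even defined, so only the existence half is required, and the same $F_{\theta}$ with $\theta$ irrational supplies it.

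I expect the genuine obstacle to be the simultaneous control of two antagonistic requirements in the construction of $F_{\theta}$: the cone must be \emph{exactly} degenerate (strictly zero spreading, so that every transverse direction is truly expansive and the unique non-expansive direction has precisely slope $1/\theta$), while the co-moving dynamics must remain sensitive (so the subaction does not collapse). For irrational $\theta$ there is the further difficulty that a finite local rule must enforce an exact irrational average drift; I would encode the drift schedule in a Sturmian/Beatty pattern carried by the scaffolding, and the delicate points will be proving that this pattern forces the precise speed $\theta$ and that the scaffolding does not itself introduce spurious non-expansive directions.
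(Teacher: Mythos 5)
Your skeleton matches the paper's: pass to the space-time picture of a reversible CA, force all information to drift rigidly at speed $\theta$ with no transverse spreading so that the "cone" degenerates to a single line, invoke Boyle--Lind non-emptiness to conclude that line is the unique non-expansive direction, recover the remaining directions by a $GL_{2}(\mathbb{Z})$ change of generators, and get rational non-triviality from the non-degeneracy of the co-moving dynamics. However, there are two genuine gaps. First, the bridge you use from Lyapunov exponents to expansiveness is invalid as stated. Shereshevsky's $\lambda^{\pm}$ are $\liminf$ quantities, and they are defined by fixing a \emph{half-infinite ray} and asking what it determines; expansiveness of a direction transverse to the cone requires, instead, that a \emph{finite} central block determines a finite (drifting) block, uniformly over all points and all times. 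A CA can have $\lambda^{+}=-\lambda^{-}=\theta$ while information propagates much faster along a sparse sequence of times (killed by the $\liminf$), and knowledge of a half-line is strictly more than knowledge of any finite block, so "the non-expansive directions are confined to the light cone cut out by $\lambda^{\pm}$" is false in general. The paper introduces a different device -- prediction shapes, a uniform finite-block notion -- precisely to make this step work (its Theorem 2.2: if the width-2 strip around $\ell$ is a prediction shape, every other direction is expansive); your "no spreading" phrase gestures at this, but your argument needs that uniform statement, not the exponents.

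Second, and more seriously, the construction of $F_{\theta}$ -- which is the actual content of the paper -- is absent. You correctly note that a single block code must serve uncountably many speeds, so the speed has to be encoded in the subshift, and you correctly flag this as "the genuine obstacle"; but the Sturmian/Toeplitz scaffolding you propose is only a guess, with no mechanism by which a finite local rule reads it, produces drift exactly $\theta$, keeps the transverse spreading bounded at \emph{every} scale, and does so reversibly, nor any argument that the scaffolding itself introduces no new non-expansive directions. The paper resolves this with an infinite hierarchy of self-simulating blocks (an intrinsic-universality construction): level $k$ simulates level $k+1$ with block length $B_{k}$, cycle time $T_{k}$ and displacement $D_{k}$, prediction shapes transform under each simulation by an explicit linear map, and the resulting speed is the convergent nested series $\lambda=\alpha_{1}+\beta_{1}(\alpha_{2}+\beta_{2}(\cdots))$, every value $|\lambda|\leq1$ being attainable by choice of parameters; one also needs the verification that the limiting system is infinite (so Boyle--Lind applies) and, for the rational case, that no element of the subaction is the identity. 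Until you supply a construction at that level of detail, together with the uniform prediction estimate it feeds into, the proof is a plan rather than a proof.
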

\begin{cor}
A set of directions occurs as the set of non-expansive directions
for an expansive $\mathbb{Z}^{2}$-action if and only if it is closed
and non-empty.
\end{cor}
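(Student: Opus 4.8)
The plan is to derive the Corollary by combining the Main Theorem with the facts about non-expansive directions already recorded in the introduction, treating the two implications separately. Throughout I assume the phase space is infinite; on a finite space every direction is expansive, so the set of non-expansive directions is empty and the statement should be read as excluding that degenerate case.

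For the necessity direction, suppose $C$ is the set of non-expansive directions of an expansive $\mathbb{Z}^{2}$ action $(X,T)$ with $X$ infinite. That $C$ is closed is one of the basic properties established by Boyle and Lind in \cite{BoyleLind97}: the set of expansive directions is open, so its complement $C$ is closed. That $C$ is non-empty is the second ingredient I would quote from \cite{BoyleLind97}, namely that an action on an infinite compact space always has at least one non-expansive direction. Hence any $C$ that occurs is necessarily closed and non-empty, with no appeal to the new construction.

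For the sufficiency direction, let $C$ be an arbitrary closed non-empty set of directions, and split according to its cardinality. If $|C|\geq 2$, I would invoke the realization result of \cite{BoyleLind97} quoted in the introduction, which produces an expansive action whose set of non-expansive directions is exactly $C$. If $|C|=1$, then $C$ consists of a single direction $\ell$, and this is precisely the case supplied by the Main Theorem: there is an expansive $\mathbb{Z}^{2}$ action whose unique non-expansive direction is $\ell$. Together these two cases cover every closed non-empty $C$, completing the equivalence.

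The point to emphasize is that essentially all the difficulty sits inside the Main Theorem: the cardinality-$\geq 2$ case and both halves of necessity are already available from \cite{BoyleLind97}, and it is only the singleton case---the existence of an expansive action with a prescribed single non-expansive direction, realized non-trivially when the direction is rational---that required new work. The one routine check I would make in writing this up is that the realizations being quoted, both from \cite{BoyleLind97} for $|C|\geq 2$ and from the Main Theorem for $|C|=1$, are genuinely expansive as $\mathbb{Z}^{2}$ actions, so that the statement indeed characterizes the sets arising for \emph{expansive} actions rather than for arbitrary ones.
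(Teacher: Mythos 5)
Your proof is correct and follows essentially the same route as the paper, which likewise proves the corollary by combining the Main Theorem (for singletons) with Boyle--Lind's realization result for sets of cardinality $\geq 2$ and their structural facts that the non-expansive set is closed and, for infinite phase spaces, non-empty. The paper only adds the remark that one could alternatively take unions of the systems produced by its construction, but that is an aside rather than the stated proof.
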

This follows by combining the theorem with Boyle and Lind's result
for sets of size $\geq2$, but can also be derived directly from our
construction by taking unions of the systems it provides. The Boyle-Lind
examples are also unions of a similar sort. Thus the constructions
we have are quite degenerate, in the sense that they decompose into
subsystems with small sets of non-expansive directions. The following
question is therefore natural:
\begin{problem}
Can every nonempty closed set of directions occur as the non-expansive
directions of $\mathbb{Z}^{2}$-action that is transitive/minimal/supports
a global ergodic measure?
\end{problem}
Another consequence of our construction is:
\begin{prop}
\label{pro:Non-sensitive-example}\label{cor:lyapunov}There exists
a cellular automaton $f$ such that, for every $t$, there is a subshift
$X_{t}$ on which $f$ acts as an automorphism without equicontinuity
points and such that the Lyapunov exponents are $\lambda^{+}=t,\lambda^{-}=-t$.
\end{prop}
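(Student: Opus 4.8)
The plan is to read off both the Lyapunov exponents and the sensitivity from the expansiveness structure of the associated $\mathbb{Z}^{2}$ action, and then to realize the required structure using the same building blocks as in the proof of the Theorem. I represent the system as a subshift $X_{t}\subseteq A^{\mathbb{Z}}$ on which the horizontal shift $\sigma$ and the map $f$ generate the action $\langle\sigma,f\rangle$, with space horizontal and time vertical; for $f$ to be a genuine finite-radius cellular automaton one needs the horizontal direction to be expansive, which will be built into the construction. The guiding principle, which the construction must make precise, is the standard dictionary relating these quantities to the directional dynamics of the action: the right and left Lyapunov exponents $\lambda^{+},\lambda^{-}$ of $f$ are the slopes of the two extreme non-expansive directions, i.e. the edges of the light cone of information propagation, while $f$ fails to have equicontinuity points as soon as the vertical (time) direction is non-expansive. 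It therefore suffices to produce, for each $t\ge 0$, an invertible cellular automaton and a subshift on which the set of non-expansive directions is exactly the closed cone of slopes $[-t,t]$: the edges $\pm t$ yield $\lambda^{+}=t$ and $\lambda^{-}=-t$, while the time direction lies inside this cone (and equals it when $t=0$), forcing sensitivity and hence the absence of equicontinuity points.

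For the construction I build a single $f$ whose alphabet carries a bi-infinite data stream together with a background sequence that, at each cell and time step, dictates whether the data advances one cell or waits, and I let $f$ transport the data reversibly at the speed prescribed by the background. The subshift $X_{t}$ is cut out by requiring the background to be a non-periodic (Sturmian-type) coding with asymptotic density $t$ of ``advance'' symbols, which is exactly the device used to realize irrational slopes in the Theorem. Averaging along the background then makes the asymptotic propagation speed equal to $t$ for every real $t$, rational or irrational, so that one fixed rule $f$ serves all values of $t$; imposing a left--right reflection symmetry on the rule and on $X_{t}$ makes the cone symmetric and yields $\lambda^{-}=-\lambda^{+}=-t$. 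For $t=0$ I keep backgrounds in which advances still occur infinitely often but with gaps tending to infinity, so that information spreads without bound yet sub-linearly: the automaton is sensitive but has vanishing Lyapunov exponents, which is the phenomenon advertised in the introduction.

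It then remains to check that information never travels faster than the prescribed speed, giving $\lambda^{+}\le t$, and that it does travel at the limiting rate $t$, giving $\lambda^{+}\ge t$. The upper bound follows from the finite radius of $f$ together with the density constraint on the background, whereas the lower bound is where the work lies: for irrational $t$, and especially for the critical case $t=0$, it requires the uniform averaging estimates on non-periodic backgrounds underlying the Theorem, controlling from below the displacement of a tracked perturbation. Horizontal expansiveness (so that $f$ is legitimately a finite-radius automaton on rows), invertibility, and sensitivity are built into the reversible transport rule, with sensitivity immediate once any perturbation, even in the $t=0$ regime, is shown to reach a fixed observation window in finite time. The main obstacle is thus the two-sided control of the propagation rate by a single local rule simultaneously for all $t$, including the degenerate case $t=0$; once this is in hand, the identifications $\lambda^{\pm}=\pm t$ and the nonexistence of equicontinuity points follow from the dictionary above.
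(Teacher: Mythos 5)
You have misread what the proposition asserts, and the misreading is fatal for $t>0$. Under the paper's definitions (section 5), $I_{t}^{-}$ is a minimum over integers $n$ that may be negative, so $\lambda^{-}$ carries a sign: $\lambda^{-}=-t<0$ says that even the \emph{left} edge of the region of influence moves to the \emph{right} at speed $t$. The ``light cone'' demanded by the statement is therefore not a symmetric cone of velocities $[-t,t]$ but a single line of velocity $t$ --- exactly the unique non-expansive direction of slope $1/t$ realized by the system $Z$ of the main theorem, whose prediction shape is the band around that line. Your target --- a reflection-symmetric rule and subshift whose non-expansive directions form the closed cone of velocities $[-t,t]$ --- produces a system in which a perturbation placed just to the right of position $n$ reaches the left half-line after time about $n/t$, so $I_{T}^{-}\approx +tT$ and $\lambda^{-}=+t$, not $-t$; indeed a left--right symmetric system forces $\lambda^{-}=\lambda^{+}$, hence can satisfy the stated conclusion only when $t=0$. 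The paper instead proves the proposition by reusing its main construction: $f=\pi$ (one fixed interpreter automaton), $X_{t}=Z$ with parameters chosen so that the drift equals $t$, the band prediction shape giving $\lambda^{+}\le t$ and $\lambda^{-}\le-t$ via the section 5 proposition, with equality because the simulation genuinely transports data at asymptotic speed $t$.

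Two further gaps would remain even with the correct target. First, your ``dictionary'' entry that $f$ has no equicontinuity points as soon as the vertical direction is non-expansive is false: the identity automorphism of any infinite subshift has a non-expansive vertical direction, yet every point is an equicontinuity point (every word is blocking). Sensitivity is not a consequence of the expansiveness picture; the paper proves it by a separate argument (Proposition \ref{pro:Z-is-counterexample}) exploiting the hierarchical structure of $Z$: a finite window carries no information about sufficiently deep levels $Z_{k}^{\infty}$, and a differing bit at that depth is eventually transported across the window. Second, the step you defer --- ``transport the data reversibly at the speed prescribed by the background'' --- is the entire difficulty rather than a detail: a finite-radius rule cannot coherently shift a dense data stream by a configuration-dependent fractional amount per step, and making this possible (reversibly, with a single rule serving all $t$) is precisely what the paper's block encoding, cycles, and nested parameters $(B_{k},W_{k},D_{k})$ accomplish. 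It is also inaccurate to describe a Sturmian-type background of density $t$ as ``exactly the device used in the Theorem''; the paper encodes the rate through an infinite hierarchy of simulations, each level contributing a term of a continued-fraction-like expansion of the drift.
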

This answers to a question of Bressaud and Tisseur \cite{BressaudTisseur2007}
in the special case $t=0$, showing that even for cellular automata
whose action is sensitive to initial conditions, information can propagate
unboundedly, but at a sublinear rate, i.e. with zero speed.

Theorem \ref{thm:main} and the last proposition are related as follows.
Suppose we wish to realize a line $\ell$ as the unique non-expansive
direction of a $\mathbb{Z}^{2}$-action. We shall do so on a zero-dimensional
phase space. In this case we may fix an expansive direction with rational
slope, and choose another rational direction so that together the
actions in these directions generate the full action (or a finite-index
subgroup of it). Using expansiveness we may re-code and identify the
first direction with the shift on some symbolic space $X$, and the
second direction with an automorphism of $X$. Thus the problem has
been reduced to one of constructing an appropriate shift space and
an automorphism of it; this is the same setting as is studied in the
theory of cellular automata.

This reduction highlights an interesting aspect of the problem. Each
automorphism is given by a block code. There are only countably many
of these, but there are uncountably many directions (or values for
Lyapunov exponents). Thus if one is to construct examples of automorphisms
which realize any given direction as the unique non-expansive one
for the generated action (or if one wants to construct CA with arbitrary
Lyapunov exponents), then the direction (or exponents) must be encoded
at least in part in the subshift rather than the automorphism. We
shall make this encoding quite explicit, effectively designing the
automorphism as an interpreter and using the subshift as a program
controlling the action of the automorphism. 

Our strategy will be to construct an automorphism that, roughly speaking,
performs a sequence of shifts on the underlying space at a rate that
is encoded in the subshift it is acting on; this rate is what will
determine the slope of the non-expansive direction. For example, taking
the full shift as our space, $\sigma$ as the shift and the automorphism
$\varphi=\sigma^{n}$, we see that in the generated $\mathbb{Z}^{2}$-action
every direction is expansive except the line $ny+x=0$. Here $\varphi$
shifts at a rate of $n$ symbols per unit time. We would like to control
this rate so as to make it an arbitrary real number $\theta$. The
implementation of this simple idea, however, is rather involved. Our
solution relies on a property that has been called \emph{intrinsic
universality} in the cellular automata literature (e.g. \cite{AlbertCulik1987}).
This means that there are automorphisms which, when restricted to
an appropriate subshift, can simulate any other automorphism up to
a temporal and spacial rescaling. We shall use an infinite hierarchy
of such automorphisms, each of which simulates the next, and such
that each level in the hierarchy performs a shift on the underlying
space at a fixed rate (this construction is somewhat reminiscent of
Gacs' error-correcting automata \cite{Gacs2001}). The sum of these
rates will determine the overall shift and the non-expansive direction,
and we will control these rates by encoding them into the shift space. 

We shall mostly use standard definitions and notation, which can be
found e.g. in \cite{Walters82}. We denote by $\sigma$ the shift
map on symbol spaces, and for a point $x\in\Sigma^{\mathbb{Z}}$ we
denote its coordinates by $x_{i}$. For a symbol $a$ we write $a^{n}$
for the $n$-fold concatenation of $a$. An automorphism $\varphi$
of a subshift $Y$ is given by a block code, and we shall say that
$\varphi$ has range $r$ if the block code acts on a $r$-neighborhood
$[-r,r]$ (this is also sometimes called the radius of $\varphi$
or its window width). The notation $O_{N}(1)$ denotes a constant
depending only on $N$. 

The rest of this paper is organized as follows. In the next section
we introduce a sufficient condition for the action generated by a
shift-automorphism and the shift to have a unique non-expansive direction.
Sections \ref{sec:Main-construction} outlines the main construction,
section \ref{sec:Implementation-details} supplies further details
of the implementation. Section \ref{sec:Realizing-unique-non-expansive-directions}
applies the construction to prove the main theorem. Section \ref{sec:Lyapunov-exponents}
discusses the relation and applications to Lyapunov exponents. Finally,
in section \ref{sec:alternative-construction} we present an simpler,
alternative construction of a system with a unique, rational non-expansive
direction.

\emph{Acknowledgment: }I would like to thank Doug Lind for some very
interesting discussions and for his permission to include the example
in section \ref{sec:alternative-construction}. This work was done
in the fall of 2008 during the special semester on additive combinatorics
and ergodic theory at MSRI, and I would like to thank the organizers
and hosts for that stimulating event.

\section{\label{sec:Prediction-shapes}Prediction shapes}

In this section we define a device that quantifies quantify propagation
of uncertainty under iteration of an automorphism. This device is
related to Shereshevsky's notion of Lyapunov exponents for cellular
automata \cite{Shereshevsky92}, although there are a number of differences.
First, we are interested in the propagation of uncertainty both forward
and backward in time, although one can easily modify our definitions
so that they are one sided, and apply to endomorphisms as well. More
importantly we measure uncertainty by fixing a finite block, rather
than a one-sided infinite ray (in Shershevsky's case one fixes a leaf
of the stable or unstable foliations). We shall discuss the relation
to Lyapunov exponents further in section \ref{sec:Lyapunov-exponents}.
\begin{defn}
\label{def:prediction-shapes}Let $Y$ be a subshift and $\varphi$
an automorphism of $Y$. A convex, open subset $\Lambda\subseteq\mathbb{R}^{2}$
is called a \emph{prediction shape }for $\varphi|_{Y}$ if $(0,1)\times\{0\}\subseteq\Lambda$
and for every compact set $\Lambda_{0}\subseteq\Lambda$ and for all
large enough $n$, if $y,z\in Y$ satisfy \[
y|_{[-n,n]}=z|_{[-n,n]}\]
then\[
(\varphi^{t}y)_{i}=(\varphi^{t}z)_{i}\mbox{ for all }(i,t)\in n\Lambda_{0}\cap\mathbb{Z}^{2}\]
where $n\Lambda_{0}=\{n\cdot u\,:\, u\in\Lambda_{0}\}$.
\end{defn}
From the definition it is clear that the increasing union of prediction
shapes is a prediction shape.

If $\varphi,\varphi^{-1}$ have range $r$ then the diamond shaped
region with vertices at $(-1,0)$ and $(1,0)$ and with sides of slope
$\pm1/r$ is a prediction shape for $\varphi|_{Y}$. This bound derives
from information about the block-code and one may sometimes get more
from the block code, but in general the prediction shapes for $\varphi|_{Y}$
depend non-trivially on $Y$. For example, if $Y$ is a finite union
of periodic orbits then for large enough $n$ the restriction $y|_{[-n,n]}$
determines $y\in Y$; therefore $\mathbb{R}^{2}$ is a prediction
region. On the other hand, for infinite subshifts it is easy to see
that no prediction shape contains $[-1,1]\times\{0\}$ in its interior. 

Our application of this notion is the following simple observation.
\begin{thm}
\label{thm:speeds-and-non-expansive-directions}Let $Y$ be a subshift
with automorphism $\varphi$. Let $\ell$ be a line through the origin
distinct from the $x$-axis, and suppose that \[
\Lambda=\ell^{1}=\{u\in\mathbb{R}^{2}\,:\, d(u,\ell)<1\}\]
is a prediction shape for $\varphi|_{Y}$. Then every direction except
$\ell$ is an expansive direction for the $\mathbb{Z}^{2}$-system
$(Y,\left\langle \sigma,\varphi\right\rangle )$. \end{thm}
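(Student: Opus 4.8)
The plan is to translate expansiveness of a direction into a statement about agreement of the two-dimensional ``spacetime'' configurations $c_{y}(i,t):=(\varphi^{t}y)_{i}$ and $c_{z}(i,t):=(\varphi^{t}z)_{i}$, and then to use the prediction shape to propagate agreement from a fixed-width strip around the candidate direction to the whole plane. Write $\ell'$ for an arbitrary direction through the origin with $\ell'\neq\ell$; I must show $\ell'$ is expansive for $(Y,\langle\sigma,\varphi\rangle)$. With the standard metric, the group element indexed by $u=(i,t)\in\mathbb{Z}^{2}$ is $T^{u}=\sigma^{i}\varphi^{t}$, and $(T^{u}y)_{j}=(\varphi^{t}y)_{i+j}$, so for a suitable $\delta>0$ the inequality $d(T^{u}y,T^{u}z)<\delta$ forces $(\varphi^{t}y)_{i}=(\varphi^{t}z)_{i}$, i.e. $c_{y}(u)=c_{z}(u)$. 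Thus the expansiveness hypothesis ``$d(T^{u}y,T^{u}z)<\delta$ for all $u\in(\ell')^{r}\cap\mathbb{Z}^{2}$'' is exactly the assertion that $c_{y}=c_{z}$ on the strip $(\ell')^{r}\cap\mathbb{Z}^{2}$, and proving $\ell'$ expansive reduces to exhibiting a single $r$ for which agreement of $c_{y},c_{z}$ on $(\ell')^{r}$ forces $c_{y}(i,0)=c_{z}(i,0)$ for all $i$, equivalently $y=z$. When $\ell'$ is the $x$-axis this is just expansiveness of the shift on a subshift, which always holds, so I may assume $\ell'$ is not horizontal.

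The next step is to record the local propagation furnished by the prediction shape. Fix $\epsilon\in(0,1)$ and a parameter $R>0$, and let $\Lambda_{0}\subseteq\ell^{1}=\Lambda$ be the compact ``long box along $\ell$'' consisting of the points within perpendicular distance $1-\epsilon$ of $\ell$ and within distance $R$ of the origin measured along $\ell$; convexity and openness of $\Lambda$ are exactly what permit $R$ to be taken arbitrarily large with $\Lambda_{0}$ still compact inside $\Lambda$. By Definition \ref{def:prediction-shapes} there is an $n_{0}$ so that for every $n\geq n_{0}$ and every integer point $p$, applying the prediction shape to $\sigma^{a}\varphi^{b}y,\sigma^{a}\varphi^{b}z$ (where $p=(a,b)$) shows that agreement of $c_{y},c_{z}$ on the horizontal block $\{p\}+([-n,n]\times\{0\})$ forces agreement on $p+(n\Lambda_{0}\cap\mathbb{Z}^{2})$. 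Geometrically $n\Lambda_{0}$ is a long thin strip around the line through the origin parallel to $\ell$, of perpendicular half-width $n(1-\epsilon)$ and half-length $nR$; translated to $p$ it is such a strip around the parallel-to-$\ell$ line through $p$, which is \emph{transverse} to $\ell'$ precisely because $\ell\neq\ell'$.

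Finally comes the geometric iteration that fills the plane. Suppose $c_{y}=c_{z}$ on a strip $S$ around $\ell'$ of perpendicular width $W$, with $W$ large enough that the slices give blocks of radius $\geq n_{0}$. For each integer height $t$ the slice $S\cap\{t\}$ is a horizontal segment of length $\asymp W/\sin\theta_{\ell'}$ centered near a point $p_{t}\in\ell'$; taking its central block of radius $n\asymp W/\sin\theta_{\ell'}$ and applying the propagation step gives agreement on $p_{t}+(n\Lambda_{0}\cap\mathbb{Z}^{2})$. Taking the union over $t$ yields agreement on (in essence) the Minkowski sum $\ell'+n\Lambda_{0}$, which is again a strip around $\ell'$ whose width equals the projection of the long axis of $n\Lambda_{0}$ onto the normal of $\ell'$, namely $\asymp nR\sin\angle(\ell,\ell')\asymp W\cdot\rho$ with $\rho:=\frac{R\sin\angle(\ell,\ell')}{\sin\theta_{\ell'}}$. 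Since $\ell\neq\ell'$ gives $\sin\angle(\ell,\ell')>0$, choosing $R$ large forces $\rho>1$, so the strip of agreement has grown in width by the factor $\rho$. Iterating produces widths $W,\rho W,\rho^{2}W,\dots\to\infty$ (the associated radii $n$ grow as well, so the hypothesis $n\geq n_{0}$ persists), and the growing strips around $\ell'$ eventually contain any prescribed segment $[-N,N]\times\{0\}$, giving $c_{y}(i,0)=c_{z}(i,0)$ for all $i$ and hence $y=z$, with $r$ chosen to produce the initial width $W$.

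The main obstacle I anticipate is the planar bookkeeping in this last step: making ``union over heights equals the Minkowski sum'' precise at the level of integer points (the translates $p_{t}+n\Lambda_{0}$ overlap and fill in because $n\Lambda_{0}$ has vertical extent $\gg 1$ once $n$ is large), and confirming that the multiplier $\rho$ can be pushed above $1$ and that the slice lengths and radii stay controlled uniformly through the iteration. These are elementary estimates, but they hinge squarely on the two structural facts $\ell\neq\ell'$ (a positive crossing angle, so propagation along $\ell$ genuinely escapes the strip around $\ell'$) and $\ell'$ non-horizontal (finite, controllable slice lengths); the case $\ell'$ horizontal being disposed of separately by shift-expansiveness.
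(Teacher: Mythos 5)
Your proposal is correct and takes essentially the same approach as the paper: you fix a compact $\Lambda_{0}\subseteq\Lambda$ elongated along $\ell$, use the prediction shape (applied at translated points via $\sigma^{a}\varphi^{b}$) to upgrade agreement on horizontal blocks centered along $\ell'$ to agreement on translates of $n\Lambda_{0}$, whose union is a strictly wider strip around $\ell'$ precisely because $\ell\neq\ell'$, and iterate to exhaust the plane. The paper packages the same widening step as the scaled containment $(\ell')^{1+\varepsilon}\subseteq(\ell')^{1}+\Lambda_{0}$, so your multiplicative factor $\rho>1$ is the paper's $1+\varepsilon$ with the bookkeeping (lattice points, slice lengths, the horizontal case) made explicit.
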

\begin{proof}
Fix a line $\ell'$ through the origin in a different direction from
$\ell$. We must show that if $r$ is large enough then $y|_{(\ell')^{r}}$
determines $y$. For this it suffices to show that there is an $r$
such that $y|_{(\ell')^{r}}$ determines $y|_{(\ell')^{r+1}}$. 

Since the slopes of $\ell,\ell'$ are different, it is easy to see
that there is an $\varepsilon>0$ and a compact set $\Lambda_{0}\subseteq\Lambda$
containing the origin with the property that $(\ell')^{1+\varepsilon}\subseteq(\ell')^{1}+\Lambda_{0}$.
The desired conclusion now follows from the fact that $\Lambda$ is
a prediction shape. See figure \ref{fig:prediction-shape}.
\end{proof}
\begin{figure}
\input{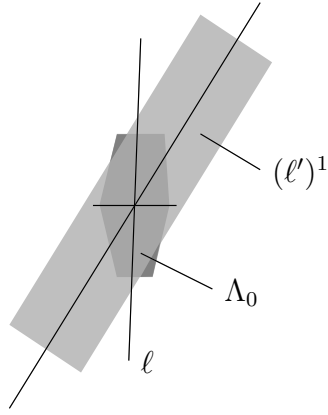}

\caption{By shifting the center of the dark region $\Lambda_{0}$ along the
line $\ell'$, one covers $(\ell')^{1+\varepsilon}$ (the area swept
out by the darkened corners of $\Lambda_{0}$).\label{fig:prediction-shape}}

\end{figure}

\begin{cor}
\label{cor:condition-for-unique-non-expansive-direction}Under the
assumptions of the theorem, if $Y$ is infinite then $\ell$ is the
unique non-expansive direction for $(Y,\left\langle \sigma,\varphi\right\rangle )$.\end{cor}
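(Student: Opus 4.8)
The plan is to combine Theorem \ref{thm:speeds-and-non-expansive-directions} with the two facts about prediction shapes established in the preceding discussion. The theorem already tells us that under its hypotheses every direction other than $\ell$ is expansive, so to prove the corollary it remains only to verify that $\ell$ itself is \emph{non-expansive}; this is the sole additional content, and it is here that the hypothesis that $Y$ is infinite must be used.

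First I would recall the observation made just after Definition \ref{def:prediction-shapes}: for an infinite subshift, no prediction shape contains $[-1,1]\times\{0\}$ in its interior. I would want to turn this into a statement directly about expansiveness of the direction $\ell$. Suppose for contradiction that $\ell$ is expansive. Then there are $r>0$ and $\delta>0$ such that agreement of two points on $\ell^{r}\cap\mathbb{Z}^{2}$ (under all subactions $T^{u}$) forces them to coincide. The key step is to translate this expansiveness data back into a prediction shape that is strictly larger than $\Lambda=\ell^{1}$. Concretely, expansiveness along $\ell$ says that knowing a point on an $r$-neighborhood of $\ell$ determines it everywhere; after rescaling by $r$ this should produce a prediction shape whose interior contains a neighborhood of $\ell^{1}$ that sticks out past the segment $[-1,1]\times\{0\}$ in the horizontal direction. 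Since $\ell$ is not the $x$-axis, a neighborhood of $\ell^1$ genuinely contains $[-1,1]\times\{0\}$ in its interior, and this contradicts the fact that no prediction shape for an infinite subshift can contain $[-1,1]\times\{0\}$ in its interior.

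The main obstacle I anticipate is making precise the passage from the expansiveness constant $r$ to an enlarged prediction shape, i.e. reconciling the two different notions of ``determination'': expansiveness is phrased in terms of agreement of the whole $\mathbb{Z}^2$-orbit $(T^{u}x)$ over lattice points $u$ near $\ell$, whereas a prediction shape is phrased in terms of agreement of $\varphi^{t}$-iterates of a coordinate. Under the identification $T^{u}=\sigma^{u_1}\varphi^{u_2}$, the coordinate $(\varphi^{t}y)_i$ is read off from $T^{(i,t)}y$ at position $0$, so agreement of orbits on a region is equivalent to agreement of the array $(\varphi^{t}y)_i$ on the corresponding region; this dictionary should make the translation routine once set up carefully. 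The one point to watch is that a prediction shape is required to be open and convex and to contain $(0,1)\times\{0\}$, so I would take the enlarged shape to be the (open, convex) $r$-neighborhood of $\ell$ rescaled appropriately, and check that expansiveness gives determination on every compact subset of it for all large $n$.

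Finally I would assemble these pieces: the existence of an expansive $\ell$ yields a prediction shape whose interior contains $[-1,1]\times\{0\}$, contradicting infiniteness of $Y$; hence $\ell$ is non-expansive, and by the theorem it is the only such direction, which is the assertion of the corollary.
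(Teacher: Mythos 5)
Your overall architecture (every direction other than $\ell$ is expansive by Theorem \ref{thm:speeds-and-non-expansive-directions}; it remains to see that $\ell$ itself is non-expansive) matches the paper, but for the remaining step the paper simply cites the theorem of Boyle and Lind \cite{BoyleLind97} that every $\mathbb{Z}^2$-action on an infinite compact space has at least one non-expansive direction: since all directions but $\ell$ are expansive, that direction must be $\ell$. You propose instead to prove non-expansiveness of $\ell$ directly, by converting expansiveness of $\ell$ into an enlarged prediction shape containing $[-1,1]\times\{0\}$ in its interior, and this conversion --- which you describe as ``routine once set up carefully'' --- is exactly where there is a genuine gap. Expansiveness of $\ell$ is a hypothesis about agreement on the \emph{entire infinite} strip $\ell^r$, whereas a prediction shape must convert a \emph{finite} window $[-n,n]\times\{0\}$ into a region scaling \emph{linearly} with $n$. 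To apply expansiveness to the finite data that the prediction shape $\ell^1$ provides, you must first pass through a compactness argument of the form ``there is $R(\rho)$ such that agreement of the arrays on $\ell^r\cap B(0,R(\rho))\cap\mathbb{Z}^2$ forces agreement on $B(0,\rho)\cap\mathbb{Z}^2$,'' and compactness gives no control whatsoever on the growth of $R(\rho)$. So no fixed ``rescaling by $r$'' is produced, and the enlarged shape does not follow.

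To see the failure concretely, take $\ell$ to be the vertical axis (the case $\lambda=0$ of the main construction, so squarely within the corollary's scope), so $\Lambda=\ell^1=(-1,1)\times\mathbb{R}$. At scale $n$ the prediction shape tells you that agreement on $[-n,n]$ forces agreement of the arrays on $\{|i|\le(1-\epsilon)n,\ |t|\le hn\}$ for any fixed $\epsilon,h$ and large $n$; to enlarge the shape you would need this to force agreement at $(\pm(n+1),0)$. But expansiveness, made finite via compactness and translation invariance, can only determine points lying within a bounded distance of a full coding window $\ell^r\cap B(v,R)$ that is itself contained in the region already known --- and any such window near $(\pm(n+1),0)$ contains sites with $|i|\ge n+1-r>(1-\epsilon)n$, so it never is. The only way out would be to know that $R(\rho)$ grows at most linearly in $\rho$, which is precisely the kind of quantitative statement that neither expansiveness nor compactness supplies. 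In other words, what is missing is not notational bookkeeping between $T^u$ and $(\varphi^t y)_i$, but the actual content of the Boyle--Lind existence theorem; the proof should simply invoke it, as the paper does.
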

\begin{proof}
This follows from the theorem and the fact that every $\mathbb{Z}^{2}$-action
on an infinite space must have at least one non-expansive direction
\cite{BoyleLind97}.
\end{proof}

\section{\label{sec:Main-construction}Main construction}

In this section and the next we construct a subshift $X$ over an
appropriate alphabet, and define a pair of endomorphisms $\pi$ and
$\widehat{\pi}$ of $X$ by specifying block codes for them. This
section describes their properties and outlines the construction.
Some further details appear in the next section.

By construction, $X,\pi$ and $\widehat{\pi}$ will satisfy the following
properties. First, the range of $\pi$ and $\widehat{\pi}$ will be
$1$, meaning that $\pi(x)_{0}$ depends only on $x_{-1},x_{0},x_{1}$
and similarly for $\widehat{\pi}$.

Second, suppose we are given the following parameters:
\begin{itemize}
\item An integer $N$.
\item A subshift $Y\subseteq\{1,\ldots,N\}^{\mathbb{Z}}$.
\item Block codes of range $1$ defining inverse automorphisms $\varphi$,$\varphi^{-1}$
of $Y$.
\item An integer $B\geq1$ ({}``Block length''), which is sufficiently
large with respect to $N,\varphi$.
\item An integer $W\geq1$ ({}``Wait time'').
\item An integer $D$ ({}``Displacement''), which may be positive or negative,
indicating displacement to the right or left respectively.
\end{itemize}
Then there is a subshift \[
X'=X'(Y,N,\varphi,B,W,D)\subseteq X\]
such that \[
\widehat{\pi}|_{X'}=(\pi|_{X'})^{-1}\]
and an integer \begin{equation}
T=(B+O_{N,\varphi}(1))(1+W+|D|)\label{eq:T-bound}\end{equation}
so that $(X',\pi)$ and $(Y,\varphi)$ are related in the following
manner. Each configuration of $x\in X'$ breaks into blocks of length
$B$ each representing one symbol from the alphabet $\{1,\ldots,N\}$
of $Y$, and thus $x\in X'$ encodes a point $y\in Y$. With this
interpretation of $x$, the endomorphism $\pi^{T}$ acts on $x$ in
the same manner that $\sigma^{D}\circ\varphi$ acts on $Y$, i.e.
it applies $\varphi$ to the encoded sequence $y$ without altering
the block structure, resulting in a new point $x'\in X$ encoding
$\varphi(y)$, and then shifts each block of $x'$ a distance of $D$
blocks, i.e. $D\cdot B$ symbols, mimicking the action of $\sigma^{D}$
on $\varphi(y)$. More precisely, there is an isomorphism\begin{equation}
(X',\sigma,\pi)\cong(Y\times\{0,\ldots,B-1\}\times\{0,\ldots,T-1\},\sigma_{B},\varphi_{T})\label{eq:simulation-isom}\end{equation}
where $\sigma_{B},\varphi_{T}$ are the suspension maps defined by\[
\begin{array}{ccccccccc}
\sigma_{B}(y,b,t) & = & ( & \sigma^{\{b=0\}}y & , & b+1\bmod B & , & t & )\\
\varphi_{T}(y,b,t) & = & ( & (\sigma^{D}\varphi)^{\{t=0\}}y & , & b & , & t+1\bmod T & )\end{array}\]
Here we have denoted $\{b=0\}=\delta_{b0}$and $\{t=0\}=\delta_{t0}$.
See figure \ref{fig:block-layers-fig}

\begin{figure}
\input{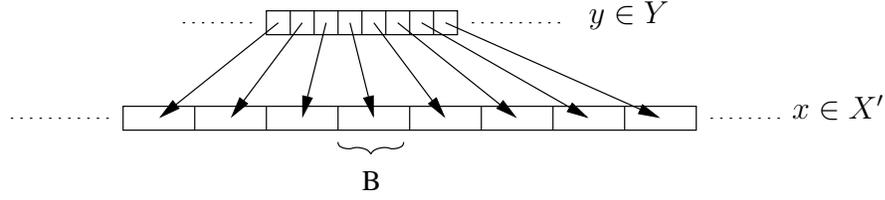}

\caption{\label{fig:block-layers-fig}Encoding a point in $y$ as a sequence
of blocks.}

\end{figure}

What we have required of $\pi,\widehat{\pi}$ is very similar to what
is called intrinsic universality, which has been studied in the CA
literature (e.g.\cite{AlbertCulik1987}), although the additional
shift by $D$ blocks is special to our construction. Such endomorphisms
have been constructed many times, as well as automorphisms \cite{MoritaHarao89}.
However, we have not found a reference that satisfies all of our requirements
exactly, and for this reason and in the interest of completeness we
provide an outline of the construction details. In this section we
give an overview; in the next we give some of the finer details. However,
the properties above are all we shall use about $\pi,\widehat{\pi}$
and one may prefer at this point skip ahead to section \ref{sec:Realizing-unique-non-expansive-directions}
where we prove theorem \ref{thm:main}.

We construct $\pi,\widehat{\pi}$ in a manner independent of the parameters
above; $\pi,\widehat{\pi}$ will operate as {}``interpreters'',
and the other parameters will be encoded in the configurations of
$X'$ so as to influence the way in which $\pi$ acts. We shall eventually
set $X=\overline{\cup X'}$, where the union ranges over all choices
of parameters. Notice that although $X$ is larger than $\cup X'$,
nonetheless $\widehat{\pi}=\pi^{-1}$ on $X$, because this is true
on each $X'$ (and $\pi,\widehat{\pi}$ are given by the same block
code on all of them).

For the construction we shall assume that the parameters $Y,N,\varphi,B,W,D$
are given, and describe the block codes for $\pi,\widehat{\pi}$ in
a way that is independent of the parameters, and a subshift $X'$
that depends on them.

The \textbf{alphabet }of $X'$ consists of quadruples of symbols,
which we denote $(b,p,s,d)$: here $b$ stands for Block structure,
$p$ for Program, $s$ for State and $d$ for Data. The projection
of a sequence onto each of these coordinates are layers: for example
the sequence of data components is the data layer.

The symbols used in the data layer will include the symbols $0,1$,
which we call \textbf{bits}. We shall represent each symbol of $Y$
by a sequence of $\left\lceil \log N\right\rceil $ bits followed
by an appropriate terminating symbol; together we call such a sequence
a \textbf{word}. 

The \textbf{Data layer }consists of two words, representing symbols
from $Y$, starting at the left side of the block; the remaining space
to the right of these words is filled with {}``blank'' symbols.
The first word in the pair is interpreted as the symbol of $Y$ currently
represented by the block, and the second as the symbol represented
by the block in the previous cycle. There will be times when the data
will be in a corrupt state, but even then it will be possible to recover
the uncorrupted current and previous states from the other layers;
see below.

The \textbf{Block layer }is a periodic sequence whose period is $B$
and is not modified by $\pi$. At this stage we may take the block
layer to be a periodic concatenation of the string $1\,0^{B-1}$,
and we shall call such a sequence, and also the indices it occupies,
simply a block (later on we will add more information to this layer;
see section \ref{sec:Implementation-details}). In the identification
$X'\cong Y\times\{0,\ldots,B-1\}\times\{0,\ldots,T-1\}$, the second
component in the image of $x\in X'$ will be determined by the residue
class mod $B$ of the position of $1$'s in the block layer of $x$.
We remark that since $B$ can be arbitrarily large, $X$ will contain
also points whose block layer consists of all $0$'s, or of a single
$1$ surrounded by $0$'s; but as we shall see these will not cause
a problem.

The \textbf{Program layer }is also periodic with period $B$ and is
not modified by $\pi$. The repeated sequence, which we call simply
the Program, is constant throughout $X'$. The program begins at the
left side with an encoding of the block codes of $\varphi$ and $\varphi^{-1}$.
This takes the form of a sequence of 5-tuples of words, representing
5-tuples of symbols from $Y$. Each 5-tuple represents an input (3
$Y$-symbols) and the corresponding output symbol of $\varphi$ and
$\varphi^{-1}$ (recall that both are assumed to have range $1$).
We separate these 5-tuples from each other with some special symbol,
and terminate the sequence of 5-tuples with another special symbol.
Next, the program layer contains the parameters $W,D$ encoded as
contiguous sequences of $1$'s, either $W$ or $D$ in number, followed
by terminating symbols. The remainder of the program layer is filled
with blanks.

Finally, the \textbf{State layer }contains auxiliary information used
to interpret the Program layer and use it to update the Data layer.
We call a sequence of state symbols corresponding to a Block simply
a State-block. The state-blocks in different blocks typically differ
from each other, since they depend on the data in the block and the
neighboring blocks, but they will be synchronized in the following
sense: there is a special state-block called the Synchronized State,
so that once (and only once) every $T$ applications of $\pi$, the
synchronized state-block appears in all the blocks of a configuration
$x\in X'$. We shall call an $x\in X'$ with all blocks synchronized
a synchronized configuration. It is during such a time that the data
layer is guaranteed to represent correctly the current and previous
symbols. Thus in the identification $X\cong Y\times\{0,\ldots,B-1\}\times\{0,\ldots,T-1\}$,
the third component of the image of $x$ is the number $t$ of applications
of $\pi^{-1}$ needed to bring $x$ to a synchronized configurations,
and the first component is the current symbol in the data layer of
of $\pi^{-t}x$.

The layout of a block is depicted in figure \ref{fig:block-layout}

\begin{figure}
\input{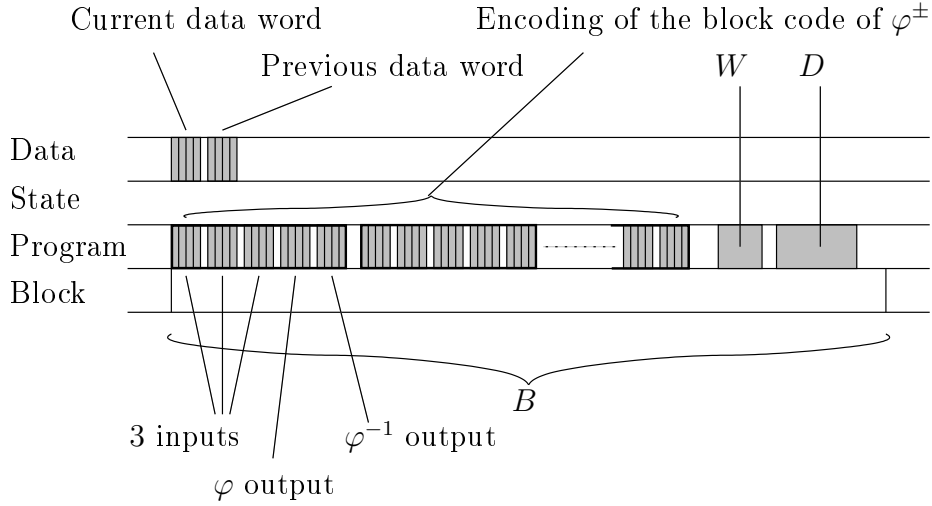}

\caption{The arrangement of information inside a block.\label{fig:block-layout}}

\end{figure}

Applying $\pi$ repeatedly to a synchronized configuration $x\in X'$
leads to the following sequence of events, which we call a Cycle (figure
\ref{fig:lookup-cycle}): 
\begin{enumerate}
\item Each block {}``transmits'' its current data word to the neighboring
blocks on its left and right, and receives the same information from
them. At the end of this stage, the state contains two words, $a_{L},a_{R}$,
representing the current $Y$-symbol encoded in the neighboring blocks
to the left and right of the current block, respectively.

This stage takes $B+O_{N}(1)$ applications of $\pi$ to complete
(each bit moves a distance of $B$, taking $B$ time steps, plus a
constant amount of time needed too coordinate the transmission which
depends only on the number of bits being transmitted, which depends
on $N$).

\item The two words from the data layer -- the current and previous $Y$-symbols
-- are copied to the state layer and simultaneously deleted from the
data layer. If $a_{C}$ is the current symbol and $a_{P}$ the previous
symbol of the block, then the state space now contains the 4-tuple
$a_{L}a_{C}a_{R}a_{P}$ of $Y$-symbols.

This stage takes $O_{N}(1)$ applications of $\pi$ to complete.

\item We now enter a loop in the course of which the 4-tuple in the state
layer is translated to the right, stopping opposite each 5-tuple in
the program layer. 

\begin{enumerate}
\item For each 5-tuple it checks if the triple of words $a_{L}a_{C}a_{R}$
matches the input-triple in the program layer. 
\item \label{enu:When-a-match-is-found}When a match is found the words
$a_{L},a_{R},$ and $a_{P}$ in the state layer are erased, and the
corresponding output word $b=\varphi_{0}(a_{L}a_{C}a_{R})$, which
is encoded in the data layer, is copied to the state layer. The state
layer now contains the current $Y$-symbol $a_{C}$ and future $Y$-symbol
$b$. 
\item The comparisons continue also after a match is found. The implementation
will be such that each comparison takes the same number of steps,
whether or not a match is found. A match will be found exactly once.
\end{enumerate}
Since the comparisons continue at the same rate after a match is made,
the number of applications of $\pi$ in this stage is independent
of the configuration, and this stage ends at the same point in the
cycle for all blocks in a configuration. The time for this step is
$O_{N,\varphi}(1)$.

\item The current and future $Y$-symbols are translated back through the
state layer to the left end of the block and transferred to the data
layer: $b$ is copied to the {}``current'' slot and $a_{C}$ to
the {}``previous'' slot, and they are simultaneously deleted from
the state layer.

This step takes $O_{N,\varphi}(1)$ applications of $\pi$ to complete.

\item The data layer is shifted $|D|$ blocks to the left or right, according
to the sign of $D$. For this, the sign is first determined, and then
a loop is performed during each iteration of which the data layer
is shifted by one block length.

This step takes $(B+O_{N}(1))\left|D\right|$ applications of $\pi$;
the $B$ term corresponds to actually transporting the data. The $O_{N}(1)$
term is the overhead required each cycle. We give further details
in the next section.

\item The state layer {}``mock-shifts'' the data layer $W$ more times,
meaning that the state goes through a cycle which takes the same amount
of time as shifting it one block, but doesn't result in such a shift
taking place.

This step takes $(B+O_{N}(1))W$ applications of $\pi$.

\item All blocks return to the synchronized state.

This takes $O_{N}(1)$ applications of $\pi$.

\end{enumerate}
Part of a cycle is depicted in figure \ref{fig:lookup-cycle}.

\begin{figure}

\hspace{-3cm}\input{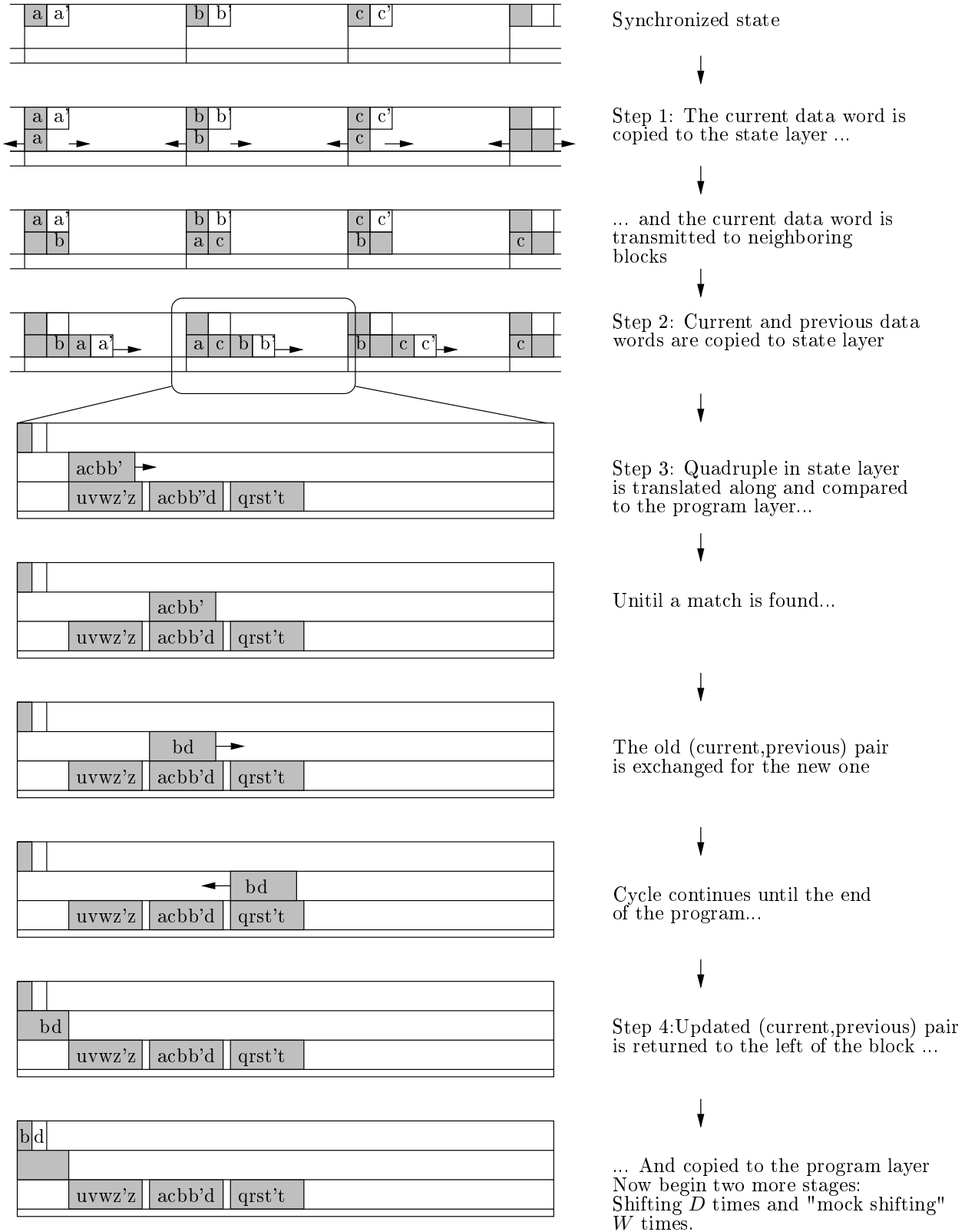}\caption{Part of a cycle, including simulation (but not the shifting).\label{fig:lookup-cycle}}

\end{figure}

Given the parameters $Y,\varphi$ etc., let $X''\subseteq X$ be the
set of synchronized configurations whose program layer corresponds
to the given parameters and whose sequence of symbols encoded in the
data layer correspond to points in $y\in Y$ and $\varphi^{-1}(y)$.
We then set $X'=\cup_{t=0}^{T-1}\pi^{t}X''$, where $T$ is the length
of one cycle.

It is clear from this description that $\pi^{T}$ simulates $\varphi$,
where $T$ is the length of a cycle. The isomorphism (\ref{eq:simulation-isom})
is given by\[
x\mapsto(y,b,t)\]
where $b$ is the least non-negative integer such that $-b$ is the
coordinate of the beginning of a block; $t$ is the number of applications
of $\pi^{-1}$ to $x$ required to bring $x$ to a synchronized configuration;
and $y\in Y$ is the sequence encoded by $\pi^{-t}(x)$, where $y_{0}$
is the symbol coded in the block to which $0$ belongs in $\pi^{-t}(x)$.
The length of a cycle is given by (\ref{eq:T-bound}).

Note that the alphabet of $X'$, and therefore of $X$, is independent
of the parameters, and in particular of $N$. Thus we can simulate
systems $(Y,\sigma,\varphi)$ on arbitrarily large alphabets by subsystems
$X'\subseteq X$ . The upshot is that in order to do so the parameter
$B$ must be large enough that the blocks can encode the parameters
as described above.

The implementation details of $\pi$ are very similar to those involved
in constructing a universal Turing machine, and are completely standard,
with one exception: generally Turing machines are not reversible,
yet we want $\pi$ to act invertibly on $X'$ and for $\widehat{\pi}=\pi^{-1}$
to have range $1$. With care this can be done. Notice that the only
place where information is deleted in the scheme above is in the stage
(\ref{enu:When-a-match-is-found}), where the 5-tuple in the state
layer matches the 4-tuple in the program layer. At this point certain
information is erased from and added to the state layer, both are
present in the program layer; this allows the process to be reversed
locally (this is the reason $\varphi^{-1}$ is encoded in the program
layer; notice that it is not used explicitly in the definition of
$\pi$). The other steps -- namely, the transferring of bits from
one place to another, etc. -- can be done invertibly with inverse
having range $1$. Thus we have achieved our stated goal. The bound
(\ref{eq:T-bound}) for $T$ follows easily from the construction.

\section{\label{sec:Implementation-details}Implementation details}

This section outlines the realization of the automorphism $\pi$ described
in the previous section. It is provided for completeness and readers
may prefer to skip ahead to the next section where the main construction
is undertaken. CA simulating other CA have been constructed a number
of times in the literature, e.g. \cite{AlbertCulik1987}, and are
similar to universal Turing machines. The only new ingredient here
is a careful analysis of certain aspects of the time complexity of
the simulation, and our emphasis on invertibility. Although both aspects
have been addressed in the literature, it is easier to indicate the
construction than to explain how to modify existing ones to meet our
specific needs.

As a complete implementation of $\pi$ would be a very lengthy undertaking,
we describe only the part of the implementation responsible for the
first stage of the cycle, in which the current data words are transmitted
between immediate neighboring blocks. In this stage we already encounter
the main ideas needed to complete the rest, and we provide a few hints
about the other stages.

The symbols of the state layer represent sets populated by \emph{agents}.
An agent is a finite state automaton. Each agent in each state-layer
cell will, with each application of $\pi$, perform one or more of
the following operations: (1) modify the symbols in the data layer,
(2) update its internal state, (3) move one cell left or right. By
design, not more than one agent per cell will attempt to modify the
data layer, so no conflict will arise. The nature of the operation
that an agent performs is determined by the other contents of its
cell prior to the operation, including its own previous internal state,
the states of the other agents in its cell, and the symbols in the
data, program and block layers. 

There are two types of agents:
\begin{itemize}
\item The \emph{main agent}. There is one such agent per block, and it is
the only agent capable of modifying the data layer. 
\item \emph{Data agents}. Used to store and transport data. We allow several
types of data agents, which play slightly different roles, but they
overall behavior is the same. 
\end{itemize}
We shall also add new symbols to the block layer. We call these \emph{roadsigns}.
Their role is to signal some event to the agents at that cell.

Note that, since the laphabet of $X$ may not depend on the parameters
$N,Y,\varphi,B,D,W$, we may introduce only a finite number of agents
and new symbols (roadsigns); but we may arrange them as we wish. This
allows us a great deal of flexibility in programming the agents and
providing them external cues to modify their behavior.

The data agents role is to store and transport a single bit of data.
Their internal state consists of a motion symbol ({}``left'', {}``right'',
or {}``stationary''); and a data symbol ({}``0'', {}``1'' or
{}``empty''). At each step, a data agent updates its state based
on roadsigns in the block layer and instructions from the main agent,
if it occupies the same cell as the data agent. Then the data agent
takes one step left or right or stays in its current cell, according
to its motion symbol.

The main agent acts as coordinator, issuing instructions to data agents
and modifying the data layer. Its operation is more complex, and it
is our goal to describe some of it in detail.

At the beginning of each cycle all agents are arranged as follows.
The main agent is located in the leftmost cell of the block in an
initial state that we call $s_{1}$. At each site where the data layer
contains a data bit from the current word, there are two data agents,
one of each type Left and one of type Right (not to be confused with
their motion state!), with motion symbol {}``stationary'' and empty
data.

The main agent is initially in a state $s_{1}$. While in this state
he moves one cell to the right at each time step, and at each cell
has the following effect: the data bit from the data layer is copied
to the data agents, and the data agents are {}``launched'', i.e.
their motion symbol is set as appropriate (the motion state of the
L:eft agent is set to {}``left'', and that of the other to {}``right'').
The end of the current data word is indicated by a roadsign (special
symbol in the block layer), and when reached it causes the main agent
to enter internal state $s_{2}$. 

At this point there are two new copies of the current data word, encoded
in sequences of agents who are marching left and right at unit speed,
forming what we shall call {}``caravans''. We would like the main
agent to meet the caravans arriving in its block and cause them to
stop. Thus he should do {}``nothing'' for a while, and arrive at
a designated spot at a designated time to receive the first caravan.
Since entering a state of inactivity and staying there is not an invertible
operation we instead have the agent walk to the right while in state
$s_{2}$ until it reaches a specially placed roadsign. At this point
it enters state $s'_{2}$, walks left until it reaches another designated
roadsign, and enters state $s_{3}$. Since the transitions are controled
by encounters with specific roadsigns they are invertible, and by
by controlling the positions of the roadsigns we can determine the
time and place at which the agent enters state $s_{3}$. 

We assume that the main agent enters state $s_{3}$ just as the first
data agent in caravan of data agents from the block to the right is
arriving at the cell where it is to stop. In state $s_{3}$ the main
agent walks to the right, and whenever it shares a left-moving data
agent it sets its motion symbol to {}``stationary''. Thus the main
agent will cause the left-moving caravan to halt. A roadsign indicates
to the main agent that it has reached the last data agent in the caravan
(this position is completely determined by the parameters and the
choice of roadsigns so far); when this roadsign is reached the main
agent enters state $s_{4}$.

In state $s_{4}$ the main agent behaves similarly, moving left and
stopping data agents arriving from the right. When the last data agent
is halted, a roadsign forces the main agent into a new state $s_{5}$.

At this point we have completed our goal: the state layer of each
block contains data words from the current block and its two neighbors.
The time elapsed from the beginning of the cycle is $B+O_{N}(1)$;
the $B$ term is the time it actually takes each data bit to travel,
and the $O_{N}(1)$ is an adjustment encompassing overhead and the
fact that slightly less than $B$ may have been traveled (we have
some choice about where to halt the caravans).

With regard invertibility, note that given the state $s_{i}$ in which
the main agent is found, the effect on data agents is invertible,
and the transitions between $s_{i}$ to $s_{i+1}$ is controlled by
roadsigns, and is invertible as well.

We conclude this outline with some further comments.

In our example we have not demontrated the deletion of bits from the
data or state layers. To make this invertible each deleted bit must
be present in some other form in the same cell. For example, when
transferring the current data word to the state layer the main agent
will delete a bit from the state layer while at the same time recording
it in a data agent; thus this operation can be reversed.

The next step in the cycle is to transfer the data-agents, representing
three data-words, to the right, stopping opposite each corresponding
triple in the program layer. This is similar to the transfer we just
performed, except there is no main agent on the receiving end to halt
the caravans. This is solved by positioning roadsigns that cause the
data agents to reverse direction. These act as {}``reflecting walls'',
and allow the main agent who launched the caravan to also halt them
(the order of bits in the data words is reversed; one can either work
with this reversal, or perform the reflection twice).

Another point that needs some care is the comparison stage, at which
the three data words represented by the data agents are compared to
the corresponding words in the program layer. When comparing the input
triple in the program to the corresponding triple in the state layer,
one cannot simply traverse them both from left to right, say, and
take note if they differ at some point, because this is not reversible
(when going backwards, you are in a state of knowing that there is
a differing pair of symbols until you reach the leftmost such pair.
After that your state is that of not yet having seen a difference.
But there is no way to know, when you reach a differing pair, if it
is the leftmost such pair or not). To overcome this, one begins on
the left, say, and puts down markings in the state layer: green until
the first difference, if there is one, and red thereafter. When the
end of the comparison is reached we note the current color, and then
go back, right to left, and erase the color markings. This procedure
is invertible with range $1$. 

Lastly, we discuss how to control the number of iterations of the
{}``shifting'' stage, which must occur $D$ and $W$ times. Perhaps
the simplest is that, during the time that the data is being transferred,
the main agent {}``counts down''. This can be done in the simplest
of ways, by coding $D$ and $W$ into the program layer as a sequence
of $D$ or $W$ special symbols, respectively; transferring them to
the state layer (as stationary data agents); and {}``crossing one
out'', i.e. resetting one of them, with each {}``shift'' iteration.

\section{\label{sec:Realizing-unique-non-expansive-directions}Realizing unique
non-expansive directions}

Before continuing, let us make some observations about the construction
above. The following is clear from the construction:
\begin{lem}
\label{lem:monotonicity-in-Y}Suppose $X_{1}\subseteq X$ is constructed
from the parameters $Y_{1},\varphi,N,B,W,D$. Suppose that $Y_{2}\subseteq Y_{1}$
is $\sigma$- and $\varphi$-invariant, and let $X_{2}$ be constructed
using parameters $Y_{2},\varphi,N,B,W,D$. Then $X_{2}\subseteq X_{1}$.
\end{lem}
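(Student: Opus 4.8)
The plan is to unwind the definition $X'=\bigcup_{t=0}^{T-1}\pi^{t}X''$ and observe that the parameter $Y$ enters the construction only through the set of admissible data layers, so that containment of the $Y$'s propagates to containment of the $X'$'s. First I would note that $X_{2}=X'(Y_{2},\varphi,N,B,W,D)$ is well-defined: by hypothesis $Y_{2}$ is a $\sigma$-invariant closed subset of $Y_{1}\subseteq\{1,\ldots,N\}^{\mathbb{Z}}$, hence a subshift over the same alphabet, and since $Y_{2}$ is also $\varphi$-invariant, the given range-$1$ block codes for $\varphi,\varphi^{-1}$ restrict to inverse automorphisms of $Y_{2}$. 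Thus all six parameters are legitimate and, crucially, the five parameters $N,\varphi,B,W,D$ are literally the same for the two constructions. Since the block codes of $\pi,\widehat{\pi}$ do not depend on the parameters at all, and since by (\ref{eq:T-bound}) the cycle length $T=(B+O_{N,\varphi}(1))(1+W+|D|)$ depends only on $N,\varphi,B,W,D$, the two constructions use the identical map $\pi$ and the identical integer $T$.

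Next I would compare the sets of synchronized configurations $X_{1}''$ and $X_{2}''$ directly. Recall that $X_{i}''$ consists of those synchronized $x\in X$ whose program layer encodes the common data $N,\varphi,\varphi^{-1},W,D$ and whose data layer encodes a pair of sequences corresponding to $y\in Y_{i}$ (the current word) and $\varphi^{-1}(y)$ (the previous word). The program-layer constraint is therefore \emph{the same} condition in both cases, and the synchronization constraint does not involve $Y$. The only place where the two definitions differ is the admissibility condition on the current-word sequence: in $X_{2}''$ it must lie in $Y_{2}$, in $X_{1}''$ in $Y_{1}$. Given $x\in X_{2}''$, its current-word sequence is some $y\in Y_{2}\subseteq Y_{1}$, and its previous-word sequence is $\varphi^{-1}(y)$, computed by the same block code; hence $x$ satisfies exactly the defining conditions of $X_{1}''$, giving $X_{2}''\subseteq X_{1}''$. (Equivalently, one may invoke the identification (\ref{eq:simulation-isom}): it is natural in $Y$, so the inclusion $Y_{2}\hookrightarrow Y_{1}$ induces $Y_{2}\times\{0,\ldots,B-1\}\times\{0,\ldots,T-1\}\hookrightarrow Y_{1}\times\{0,\ldots,B-1\}\times\{0,\ldots,T-1\}$, and transporting back through the isomorphism yields $X_{2}'\subseteq X_{1}'$.)

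Finally I would conclude by applying the common map $\pi^{t}$ and taking the union over the common range $0\le t\le T-1$:
\[
X_{2}=\bigcup_{t=0}^{T-1}\pi^{t}X_{2}''\subseteq\bigcup_{t=0}^{T-1}\pi^{t}X_{1}''=X_{1}.
\]
I do not expect any serious obstacle here; the statement is essentially a bookkeeping observation that the construction is monotone in its $Y$-argument. The one point requiring genuine (if routine) care is verifying that \emph{all} the defining constraints of $X''$ other than ``$y\in Y$'' — the program-layer content, the synchronization condition, and the ``previous word equals $\varphi^{-1}(y)$'' condition — are insensitive to replacing $Y_{1}$ by $Y_{2}$; this is where the hypotheses that $Y_{2}$ be both $\sigma$- and $\varphi$-invariant are used, the former so that $X_{2}$ is defined over the same block structure and the latter so that $\varphi^{-1}(y)$ is again a legitimate encodable previous word.
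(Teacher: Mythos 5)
Your proof is correct and is essentially the paper's own (implicit) argument: the paper states this lemma as ``clear from the construction,'' with no written proof, and your unwinding of $X''$ and $X'=\bigcup_{t=0}^{T-1}\pi^{t}X''$ — noting that $\pi$, $T$, and the program-layer constraint depend only on the shared parameters $N,\varphi,B,W,D$, so $Y$ enters only through the data-layer admissibility condition — is precisely the bookkeeping the author left to the reader. Your verification of well-definedness (that $\varphi,\varphi^{-1}$ restrict to inverse automorphisms of $Y_{2}$) is a worthwhile addition, not a deviation.
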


\begin{lem*}
$\widehat{\pi}|_{X}=(\pi|_{X})^{-1}$\end{lem*}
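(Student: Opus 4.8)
The plan is to exploit the two facts emphasized in the construction: that $\cup X'$ is dense in $X=\overline{\cup X'}$ by definition, and that $\pi$ and $\widehat{\pi}$ are each given by a single block code, applied uniformly across all parameter choices. Since block codes are continuous in the product topology, it suffices to verify the two identities $\widehat{\pi}\circ\pi=\id$ and $\pi\circ\widehat{\pi}=\id$ on the dense set $\cup X'$ and then extend them to all of $X$ by continuity.

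First I would record that $\pi$ and $\widehat{\pi}$ genuinely map $X$ into itself. For any fixed parameters, $\pi|_{X'}$ is an automorphism of $X'$, so $\pi(X')=X'$, and likewise $\widehat{\pi}(X')=X'$; hence both $\pi$ and $\widehat{\pi}$ carry $\cup X'$ into itself. Because each is continuous on the compact space $X$, one has $\pi(X)=\pi(\overline{\cup X'})=\overline{\pi(\cup X')}=\overline{\cup X'}=X$, and similarly $\widehat{\pi}(X)=X$, so both are endomorphisms of $X$ as claimed.

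Next I would check the two compositions on $\cup X'$. Every point of $\cup X'$ lies in some $X'=X'(Y,N,\varphi,B,W,D)$, and for that $X'$ the construction guarantees $\widehat{\pi}|_{X'}=(\pi|_{X'})^{-1}$. Thus $\widehat{\pi}(\pi(x))=x$ and $\pi(\widehat{\pi}(x))=x$ for every $x\in X'$; and since the \emph{same} block codes define $\pi$ and $\widehat{\pi}$ on all of $X$, these equalities hold verbatim for every $x\in\cup X'$.

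Finally I would extend by continuity. The maps $\widehat{\pi}\circ\pi$ and $\id_{X}$ are continuous from $X$ to $X$ and agree on the dense subset $\cup X'$; since $X$ is a compact metric, hence Hausdorff, space, they agree on all of $X$. The same argument applied to $\pi\circ\widehat{\pi}$ yields $\pi\circ\widehat{\pi}=\id_{X}$. Together these say precisely that $\widehat{\pi}|_{X}=(\pi|_{X})^{-1}$. I do not expect a genuine obstacle: the only points requiring care are the uniformity of the block codes across parameter choices and the $\pi$- and $\widehat{\pi}$-invariance of $X$, both of which are already built into the construction, so the argument reduces to the density-and-continuity routine above.
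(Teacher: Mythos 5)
Your proposal is correct and takes essentially the same route as the paper: the identities are verified on each $X'$, and the fact that $\pi$ and $\widehat{\pi}$ are given by fixed block codes (hence continuous, uniformly across all parameter choices) lets them pass to the closure $X=\overline{\cup X'}$. You merely spell out the density-and-continuity step and the invariance of $X$ that the paper's one-line proof leaves implicit.
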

\begin{proof}
Recall that $X=\overline{\cup X'}$, the union being over all systems
constructed from permissible parameters. The lemma follows from the
fact that for each such $X'$ we have $\widehat{\pi}|_{X'}=(\pi|_{X'})^{-1}$,
and in all cases $\pi$ is given by the same block code.
\end{proof}
Next, we relate the prediction shapes of $\varphi|_{Y}$ and $\pi|_{X'}$. 
\begin{lem}
\label{lem:shape-contraction}Suppose $X'\subseteq X$ is constructed
from the parameters $Y,\varphi,N,B,W,D$. Let $\Lambda$ be a prediction
shape for $\varphi|_{Y}$. Then $A(\Lambda)$ is a prediction shape
for $\pi|_{X'}$, where $A:\mathbb{R}^{2}\rightarrow\mathbb{R}^{2}$
is the linear map fixing $e_{1}=(1,0)^{T}$ and mapping $e_{2}=(0,1)^{T}$
to the vector $(D,T/B)$, or in matrix form,\[
A=\left[\begin{array}{cc}
1 & D\\
0 & T/B\end{array}\right]\]
\end{lem}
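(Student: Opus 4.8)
The plan is to transfer the prediction-shape property from $\varphi|_Y$ to $\pi|_{X'}$ by pushing it through the simulation isomorphism (\ref{eq:simulation-isom}), using the linear map $A$ to convert coordinates in the $(Y,\varphi)$-picture into coordinates in the $(X',\pi)$-picture. The heuristic is dimensional: one symbol of $Y$ occupies a block of $B$ symbols of $X'$, so a horizontal displacement of one $Y$-site corresponds to a displacement of $B$ sites in $X'$; and one application of $\varphi$ corresponds to one cycle, i.e. $T$ applications of $\pi$. But each cycle also translates the data by $D$ blocks, i.e. by $DB$ symbols, so time in $(X',\pi)$ carries a spatial drift. Thus the natural correspondence sends the $X'$-vector $(i,t)$ (position, $\pi$-time) back to a $(Y,\varphi)$-vector; inverting, a $(Y,\varphi)$-vector $(i',t')$ should map to $(B i' + D B t', T t')$ in $X'$-units, and after normalizing by $B$ one sees exactly the columns of $A$: $e_1 \mapsto e_1$ (horizontal scale is absorbed by measuring $Y$-displacement in blocks) and $e_2 \mapsto (D, T/B)$.

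First I would set up the precise dictionary between the two systems afforded by the isomorphism. Given $x, x' \in X'$ that agree on a large window $[-n', n']$ in the $X'$-alphabet, I would show this forces the encoded points $y, y' \in Y$ to agree on a window $[-n, n]$ with $n \approx n'/B$, and similarly that equality of $(\varphi^{s} y)_{j}$ for $(j,s)$ in a scaled prediction region $m\Lambda_0$ for $\varphi|_Y$ translates, via the block/cycle correspondence, into equality of $(\pi^{t} x)_{i}$ for $(i,t)$ in the region $m' A(\Lambda_0)$ with $m' \approx Bm$. Concretely, the value of the $X'$-configuration at coordinate $i$ after $t$ applications of $\pi$ is determined, through the explicit description of a cycle, by the $Y$-symbol occupying the relevant block together with the position within the cycle; and both of these are controlled by $\varphi^{s} y$ at the block index $j$, where $t = T s + (\text{phase})$ and $i = B j + D B s + (\text{offset within block})$. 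Tracking these identities is what produces the map $A$.

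The step I expect to be the main obstacle is the careful bookkeeping of the additive constants and the asymptotics hidden in the ``for all large enough $n$'' quantifier of Definition \ref{def:prediction-shapes}. Because $A(\Lambda)$ is open and we need only that every compact $\Lambda_0 \subseteq A(\Lambda)$ is covered for large $n$, I can afford to lose bounded amounts of room: the finite phase-offset within a cycle (at most $T$), the within-block offset (at most $B$), and the $O_{N,\varphi}(1)$ coordination overheads all contribute error terms that are $o(n)$ after rescaling, so by slightly shrinking $\Lambda_0$ and invoking openness of $A(\Lambda)$ I can absorb them. The plan is therefore: fix a compact $\Lambda_0 \subseteq A(\Lambda)$, pull it back to a compact $A^{-1}(\Lambda_0) \subseteq \Lambda$, enlarge it slightly to a compact $\Lambda_0' \subseteq \Lambda$ to create a buffer, apply the prediction-shape hypothesis for $\varphi|_Y$ at scale $n/B$ to conclude that $y|_{[-n/B,\,n/B]}$ determines $\varphi^{s}y$ on $(n/B)\Lambda_0'$, and finally use the explicit cycle description to check that this determines $(\pi^{t}x)_i$ for all $(i,t) \in n\Lambda_0 \cap \mathbb{Z}^2$ once $n$ is large. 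Verifying that the geometry matches — that the image $A((n/B)\Lambda_0')$ genuinely contains $n\Lambda_0$ in integer points — is the one spot where the exact form of $A$, rather than just its heuristic shape, must be pinned down.
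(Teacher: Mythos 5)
Your proposal is correct and takes essentially the same route as the paper: the paper's entire proof is that the lemma is ``immediate from the identification (\ref{eq:simulation-isom})'', and your argument simply spells out the bookkeeping (the block/cycle dictionary, pulling $\Lambda_0$ back under $A$ and buffering inside the open set $\Lambda$, and absorbing the $O(1)$ phase and offset errors) that makes it immediate. One notational slip in your last sentence: the region that must contain $n\Lambda_0$ is the image of $(n/B)\Lambda_0'$ under the \emph{unnormalized} dictionary $(j,s)\mapsto(Bj+DBs,\,Ts)$, which equals $nA(\Lambda_0')$, not $A\bigl((n/B)\Lambda_0'\bigr)=(n/B)A(\Lambda_0')$ --- this is consistent with the scaling $m'\approx Bm$ you state correctly earlier, and with it the containment reduces to $\Lambda_0'\supseteq A^{-1}(\Lambda_0)$, exactly your buffer condition.
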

\begin{proof}
This is immediate from the identification\[
(X',\sigma,\pi)\cong(Y\times\{0,\ldots,B-1\}\times\{0,\ldots,T-1\},\sigma_{B},\varphi_{T})\qedhere\]

\end{proof}
We now undertake the main construction of this section, and proceed
to analyze it. Let $N$ be the number of symbols in the alphabet of
$X$. Fix a sequence $B_{n},D_{n},W_{n}$ of parameters for the construction
above, with $W_{k}\geq2$ (hence $T_{k}/B_{k}\geq2$). 

For a subshift $Y\subseteq X$ invariant under $\pi$, let $Z_{n}(Y)$
denote the system $X'$ constructed above with parameters $N,Y,\pi,B_{n},D_{n},W_{n}$.
Define \[
Z_{n}^{n+k}=Z_{n}(Z_{n+1}(\ldots(Z_{n+k}(X))\ldots)).\]
An induction using lemma \ref{lem:monotonicity-in-Y} shows that $Z_{n}^{n+k+1}\subseteq Z_{n}^{n+k}$;
thus \[
Z_{n}^{\infty}=\bigcap_{k=1}^{\infty}Z_{n}^{n+k}\]
is non-empty and $\sigma$- and $\pi$-invariant. Finally, set\[
Z=Z_{1}^{\infty}.\]
It is easy to see that we have the relation \begin{equation}
Z=Z_{1}^{\infty}=Z_{1}(Z_{2}^{\infty})=Z_{1}(Z_{2}(Z_{3}^{\infty}))=\ldots\label{eq:Z-recursion}\end{equation}
etc. 

We will now show that there is a (necessarily unique) line $\ell$
through the origin so that \[
\Lambda=\ell^{1}=\{u\in\mathbb{R}^{2}\,:\, d(u,\ell)<1\}\]
is a prediction shape of $Z$, and calculate the slope of $\ell$.
It suffices to write $\Lambda$ as an increasing union of prediction
shapes for $\pi|Z$.

Let $\Delta$ denote the unit ball in $\mathbb{R}^{2}$ with the norm
$\left\Vert \cdot\right\Vert _{1}$, which is a prediction shape for
$\pi$ and any $\pi$-invariant subshift of $X$. Let $A_{n}:\mathbb{R}^{2}\rightarrow\mathbb{R}^{2}$
denote the map associated as in lemma \ref{lem:shape-contraction}
to $Z_{n}(\cdot)$. Since $Z_{n}^{\infty}=Z_{n}(Z_{n+1}^{\infty})$
and $\Delta$ is a prediction shape for $Z_{n+1}^{\infty}$, it follows
from lemma \ref{lem:shape-contraction} that $A_{n}(\Delta)$ is a
prediction shape for $Z_{n}^{\infty}$. Therefore since $Z_{n-1}^{\infty}=Z_{n-1}(Z_{n}^{\infty})$
the same lemma gives that $A_{n-1}(A_{n}\Delta)$ is a prediction
shape for $Z_{n-1}^{\infty}$, and iterating we have that\[
\Delta_{n}:=A_{1}A_{2}\ldots A_{n}(\Delta)\]
is a prediction shape for $Z=Z_{1}^{\infty}$. 

Notice that each the shape $\Delta_{n}$ is a quadrilateral having
two vertices, $(-1,0)$ and $(1,0)$, on the $x$-axis, one vertex
above the $x$-axis, and one below it. We shall now analyze the asymptotic
behavior of these vertices. Let $A$ be the matrix in lemma \ref{lem:shape-contraction},
i.e. it is of the form \[
A=\left[\begin{array}{cc}
1 & D\\
0 & T/B\end{array}\right]\]
for integer parameters $D,T,B$ and $T/B$. Let $(x,y)^{T}\in\mathbb{R}^{2}$
with $y>0$ and $(x',y')^{T}=A(x,y)$. Then \[
\frac{x'}{y'}=\frac{DB}{T}+\frac{B}{T}\cdot\frac{x}{y}\]

Now fix $n$ and let $(x_{n},y_{n})^{T}$ denote the vertex of the
quadrilateral $\Delta_{n}$ that lies in the upper half plane. Then\[
(x_{n},y_{n})^{T}=A_{1}\ldots A_{n}(0,1)\]
so $x_{n}/y_{n}$ is given by \[
\frac{x_{n}}{y_{n}}=\frac{D_{1}B_{1}}{T_{1}}+\frac{B_{1}}{T_{1}}\left(\frac{D_{2}B_{2}}{T_{2}}+\frac{B_{2}}{T_{2}}\left(\ldots\left(\frac{D_{n}B_{n}}{T_{n}}+\frac{B_{n}}{T_{n}}\cdot\frac{0}{1}\right)\ldots\right)\right)\]
or, using (\ref{eq:T-bound}) and writing \[
\alpha_{k}=\frac{D_{k}B_{k}}{T_{k}}=(1+\varepsilon_{k})\frac{D_{k}}{|D_{k}|+W_{k}}\qquad\mbox{and}\qquad\beta_{k}=\frac{B_{k}}{T_{k}}=\frac{1}{(1+\varepsilon_{k})(|D_{k}|+W_{k})}\]
where $\varepsilon_{k}=O_{N,\varphi}(\frac{1}{B_{k}})$, we have\[
\frac{x_{n}}{y_{n}}=\alpha_{1}+\beta_{1}(\alpha_{2}+\beta_{2}(\ldots(\alpha_{n}+\beta_{n}\cdot0))\ldots)\]
Since $T_{k}/B_{k}\geq2$ for all $k$ by our choice of parameters,
$\beta_{k}<1/2$, so this sequence converges to some $\lambda\in(-1,1)$.
For the same reason we have $y_{n}\geq2^{n}$ and thus for large enough
$n$ we have $x_{n}\rightarrow\infty$. Since the slopes of the two
sides of $\Delta_{n}$ which lie in the upper half plane are\[
\frac{y_{n}}{x_{n}-1}\mbox{ and }\frac{y_{n}}{x_{n}+1}\]
it follows that that these slopes converge to the common value $1/\lambda$.

A similar calculation shows that as $n\rightarrow\infty$ the remaining
vertex of $\Delta_{n}$ grows unboundedly in norm and that the slope
of the remaining two sides converges to $1/\lambda$ as well. 

Finally, it follows that there is an increasing subsequence of the
$\Delta_{k}$'s whose union is (necessarily) the set $\Lambda=\ell^{1}$,
where $\ell$ is the line with slope $1/\lambda$. 

In summary, we have:
\begin{thm}
\label{thm:1-over-lambda-is-unique-nonexpansive-direction}$(Z,\left\langle \sigma,\pi\right\rangle )$
has a unique non-expansive direction whose slope is $1/\lambda$,
where \[
\lambda=\alpha_{1}+\beta_{1}(\alpha_{2}+\beta_{2}(\alpha_{3}+\ldots(\alpha_{n}+\beta_{n}(\ldots)))\ldots)\]
and $\alpha_{k},\beta_{k},\varepsilon_{k}$ are as given above. \end{thm}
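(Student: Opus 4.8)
The plan is to read the theorem off from the analysis already carried out in the preceding paragraphs; the only ingredients not yet explicitly assembled are the remark that increasing unions of prediction shapes are again prediction shapes, and the fact that $Z$ is infinite. Once these are in hand, the statement is a direct application of Corollary~\ref{cor:condition-for-unique-non-expansive-direction}. Concretely, I would first record that each quadrilateral $\Delta_n = A_1A_2\cdots A_n(\Delta)$ is a prediction shape for $\pi|_Z$: this comes from iterating Lemma~\ref{lem:shape-contraction} down the tower $Z = Z_1(Z_2(Z_3(\cdots)))$ of (\ref{eq:Z-recursion}), using that the $\|\cdot\|_1$-ball $\Delta$ is a prediction shape for every $\pi$-invariant subshift of $X$.

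Next I would invoke the convergence computation already performed. Writing $(x_n,y_n)^T = A_1\cdots A_n(0,1)^T$ for the upper vertex of $\Delta_n$, the bound $T_k/B_k \ge 2$ forces $\beta_k < 1/2$, so the nested expression for $x_n/y_n$ converges geometrically to $\lambda \in (-1,1)$, while $y_n \ge 2^n \to \infty$ forces $x_n \to \infty$. Consequently the two upper edge-slopes $y_n/(x_n\mp 1)$ both tend to $1/\lambda$, and the symmetric calculation handles the lower vertex. I would then extract an increasing subsequence of the $\Delta_k$ whose union is the open strip $\Lambda = \ell^1$ about the line $\ell$ of slope $1/\lambda$; by the remark following Definition~\ref{def:prediction-shapes}, this union is again a prediction shape, so $\ell^1$ is a prediction shape for $\pi|_Z$.

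It remains to check that $Z$ is infinite, which is needed before Corollary~\ref{cor:condition-for-unique-non-expansive-direction} applies. Here I would use the isomorphism (\ref{eq:simulation-isom}) level by level: it gives $|Z_n^\infty| = |Z_{n+1}^\infty|\cdot B_n T_n$, whence $|Z| = |Z_{n+1}^\infty|\cdot\prod_{k=1}^n B_k T_k \ge \prod_{k=1}^n B_k T_k$ for every $n$, since $Z_{n+1}^\infty$ is nonempty. As $T_k \ge 2B_k \ge 2$, this product diverges, so $Z$ cannot be finite. With $Z$ infinite and $\ell^1$ a prediction shape for $\pi|_Z$, Corollary~\ref{cor:condition-for-unique-non-expansive-direction} yields that $\ell$ is the unique non-expansive direction of $(Z,\langle\sigma,\pi\rangle)$, and the slope of $\ell$ is $1/\lambda$ by construction.

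The step I expect to require the most care is the passage from the vertex asymptotics to the assertion that an increasing subsequence of the $\Delta_k$ actually fills out the full strip $\ell^1$: the $\Delta_k$ need not be nested, so one must argue, using convexity together with the joint control $x_k/y_k\to\lambda$, $y_k\to\infty$ and the slope convergence of all four edges, that a cofinal nested family can be chosen and that its union is neither too small nor too large. The genuinely quantitative content---the geometric convergence of the continued-fraction expression, and hence the identification of $\lambda$---is already dispatched by the contraction estimate $\beta_k<1/2$, and the infiniteness of $Z$ is the only other point that needs an explicit (but short) argument.
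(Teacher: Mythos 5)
Your proposal is correct and follows essentially the same route as the paper: the paper likewise establishes that the quadrilaterals $\Delta_n=A_1\cdots A_n(\Delta)$ are prediction shapes for $\pi|_Z$ via Lemma \ref{lem:shape-contraction}, passes to an increasing subsequence whose union is $\ell^1$, proves $Z$ is infinite by the same level-by-level counting through the isomorphism (\ref{eq:simulation-isom}), and then concludes via Theorem \ref{thm:speeds-and-non-expansive-directions} together with the Boyle--Lind existence result (which is exactly the content of Corollary \ref{cor:condition-for-unique-non-expansive-direction} that you invoke). The subtle point you flag---that the $\Delta_k$ need not be nested and one must argue a cofinal increasing family fills out exactly $\ell^1$---is glossed over in the paper as well ("it follows that there is an increasing subsequence\ldots"), so your treatment matches the paper's level of rigor there too.
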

\begin{proof}
Theorem \ref{thm:speeds-and-non-expansive-directions} implies that
any line with slope different from $1/\lambda$ is expansive (and
the horizontal direction is as well, since $\sigma$ acts expansively
by definition). That the line with slope $1/\lambda$ is a non-expansive
direction then follows from the fact that any infinite system has
non-expansive directions, and the following claim.\end{proof}
\begin{claim}
\label{cla:Z-is-infinite}For any choice of parameters, the system
$Z$ is infinite.\end{claim}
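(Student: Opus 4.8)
The plan is to show that $Z$ contains infinitely many distinct points, which I would do by exhibiting an uncountable family of points or, more simply, by producing a sequence of distinct configurations. The natural approach is to exploit the recursive structure $Z = Z_1(Z_2(Z_3(\ldots)))$ together with the fact that each application of the construction $Z_n(\cdot)$ faithfully encodes whatever subshift it is applied to. First I would observe that the empty intersection would be the only way for $Z$ to be finite or trivial, and the recursion relation (\ref{eq:Z-recursion}) shows $Z$ is nonempty; so the real content is to rule out $Z$ being a finite union of periodic orbits.

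The key idea I would pursue is that the simulation isomorphism (\ref{eq:simulation-isom}) is an \emph{injection on the level of encoded data}: a point of $Z_n(Y)$ encodes, in its data layer, a point $y \in Y$, and distinct $y$ give rise to distinct configurations of $Z_n(Y)$. Consequently, $Z_n(Y)$ is infinite whenever $Y$ is. I would therefore argue by descent through the hierarchy. If $Z = Z_1(Z_2^\infty)$ encoded only finitely many points, then by the faithfulness of the encoding $Z_2^\infty$ would have to be finite as well; iterating, every $Z_n^\infty$ would be finite. The plan is then to derive a contradiction by showing that a finite $\pi$-invariant subshift cannot support the nontrivial shifting behavior that the construction forces: each cycle of $\pi^{T}$ simulates $\sigma^{D}\circ\varphi$ on the encoded sequence, and since the block length $B_n$ may be taken arbitrarily large, the periods of the periodic orbits would have to grow without bound, which is incompatible with finiteness.

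Alternatively — and this may be the cleaner route — I would build infinitely many distinct points directly. Since the alphabet and block codes are fixed but $B_n$ can be arbitrarily large, I can choose within each $Z_n(\cdot)$ a configuration whose block layer has period $B_n$, and by taking the $B_n$ to be distinct (or growing) across the hierarchy, the resulting points of $Z$ have block-layer periods that differ, hence are distinct configurations. The point is that the block layer is not modified by $\pi$, so its period is a $\pi$-invariant (indeed $\langle \sigma,\pi\rangle$-invariant) feature of the configuration, and infinitely many distinct periods force infinitely many distinct points. Because any admissible choice of the parameter sequence $B_n, D_n, W_n$ (subject only to $W_k \geq 2$ and $B_k$ large) yields a valid $Z$, this argument applies for \emph{any} choice of parameters, which is exactly the scope of the claim.

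The main obstacle I anticipate is the bookkeeping needed to verify that the hierarchical construction does not collapse: one must confirm that the encoding at each level genuinely preserves the cardinality (or at least infinitude) of the simulated system, and that passing to the inverse limit $Z_n^\infty = \bigcap_k Z_n^{n+k}$ does not throw away all the points one has built. The delicate part is ensuring a \emph{single} point survives in the intersection across all levels simultaneously — a compactness argument should handle this, since each $Z_n^{n+k}$ is a nonempty closed (hence compact) subset of the compact space $X$ and the sequence is nested by Lemma \ref{lem:monotonicity-in-Y}, so the intersection is nonempty; but I would need to check that the surviving points inherit the distinguishing features (distinct block periods, or faithfully encoded distinct $Y$-points) rather than degenerating to the trivial all-zero block layer that the paper already warns about.
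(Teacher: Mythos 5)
Your proposal has a genuine gap, and it appears in both of the routes you sketch. The most serious problem is a quantifier error: the claim asserts that $Z$ is infinite for \emph{every fixed} admissible parameter sequence, yet both of your arguments invoke freedom to choose the parameters ("since the block length $B_n$ may be taken arbitrarily large", "by taking the $B_n$ to be distinct (or growing) across the hierarchy"). You do not have that freedom: the adversary may take every $B_n$ equal to one and the same constant $B_0$ (each $B_n$ need only be large relative to the fixed $N$ and $\pi$; nothing forces the sequence to grow or to consist of distinct values). Your direct construction also conflates levels of the hierarchy with distinct points of $Z$: every point of $Z=Z_1(Z_2^\infty)$ has level-$1$ block layer of least period exactly $B_1$, so there are no points of $Z$ with differing block-layer periods to be found. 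The periods $B_2,B_3,\ldots$ belong to structures encoded in the data of one and the same configuration, not to the literal block layers of different configurations, so "infinitely many distinct periods force infinitely many distinct points" is not something your construction produces.

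The descent route stalls for a related reason: faithfulness of the encoding gives only $|Z_n(Y)|\geq|Y|$, i.e. non-increasing cardinalities down the tower, which is perfectly consistent with every $Z_n^\infty$ being a single finite orbit; the contradiction you gesture at ("a finite $\pi$-invariant subshift cannot support the nontrivial shifting behavior") is never established and again leans on growing $B_n$. The missing idea is the paper's one-line counting mechanism: by the isomorphism (\ref{eq:simulation-isom}), each point of $Z_{n+1}^\infty$ is represented by $T_nB_n$ distinct points of $Z_n^\infty$ (the suspension phases $(b,t)$ with $0\leq b<B_n$, $0\leq t<T_n$), so $|Z_n^\infty|\geq T_nB_n\,|Z_{n+1}^\infty|\geq 2\,|Z_{n+1}^\infty|$; combined with the nonemptiness of every $Z_n^\infty$ (the compactness point you correctly make), this yields $|Z|\geq 2^n$ for all $n$. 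If you prefer to salvage your own instinct about periods, the correct version is: a single point of $Z$ encodes at level $n$ a block structure whose least period, as seen in $Z$, is $B_1B_2\cdots B_n\geq 2^n$; hence no point of $Z$ is $\sigma$-periodic, and a nonempty subshift containing a non-periodic point is infinite. Note that this works because the \emph{products} grow, regardless of whether the individual $B_n$ are distinct.
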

\begin{proof}
From the representation (\ref{eq:simulation-isom}), we see that every
choice of $z_{2}\in Z_{2}^{\infty}$ is represented, modulo $\sigma$-shifts,
by $T_{1}B_{1}$ points $z_{1}=z_{1}(z_{2})\in Z$. Thus $|Z|=|Z_{1}^{\infty}|\geq T_{1}B_{1}|Z_{2}^{\infty}|$.
Similarly, $|Z_{2}^{\infty}|\geq T_{2}B_{2}|Z_{3}^{\infty}|$, so
$|Z|\geq T_{1}B_{1}T_{2}B_{2}|Z_{3}^{\infty}|$; and so one. Since
$T_{n}B_{n}\geq2$ for each $n$, the conclusion follows.
\end{proof}
We have shown that out construction yields systems with a unique non-expansive
direction of the form $1/\lambda$. It remains to show that any direction
can be attained.  The following an elementary exercise in representing
reals:
\begin{lem}
\label{lem:any-value-of-lambda-is-possible}For any real number $|\theta|\leq1$
occurs as the number $\lambda$ for some choice of the parameters
$B_{k},W_{k},D_{k}$, and we may choose $W_{k}\geq2$.\end{lem}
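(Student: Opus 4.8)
The plan is to read $\lambda$ as a generalized radix expansion and to hit a prescribed value by a greedy digit algorithm that absorbs, at every stage, the small errors $\varepsilon_k$. Unwinding the nested expression of Theorem~\ref{thm:1-over-lambda-is-unique-nonexpansive-direction} gives
\[
\lambda=\sum_{k=1}^{\infty}\alpha_{k}\prod_{j<k}\beta_{j},
\]
so the task is to choose the displacements $D_k\in\mathbb{Z}$ and waits $W_k\ge 2$ — equivalently the bases $M_k:=|D_k|+W_k$, for which $\alpha_k\approx D_k/M_k$ and $\beta_k\approx 1/M_k$ up to the factor $1+\varepsilon_k$ — so that this sum equals a given $\theta$. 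The decisive piece of freedom is that $\varepsilon_k=O_{N,\varphi}(1/B_k)$: once $D_k,W_k$ are fixed we may take $B_k$ as large as we please and thereby force $|\varepsilon_k|\le\delta$ for any preassigned $\delta>0$.

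I would run a greedy algorithm with \emph{exact} remainder feedback. Fix once and for all a large integer base $M$ depending only on $\theta$, set $\theta_1=\theta$, use $M_k=M$ for all $k$, and at step $k$ demand $\theta_k=\alpha_k+\beta_k\theta_{k+1}$, that is
\[
\theta_{k+1}=\frac{\theta_k-\alpha_k}{\beta_k}.
\]
In the idealized case $\varepsilon_k=0$ this reads $\theta_{k+1}=M\theta_k-D_k$, so one takes $D_k$ to be the nearest integer to $M\theta_k$, sets $W_k=M-|D_k|$, and then chooses $B_k$ large. The purpose of feeding back the exact remainder $\theta_{k+1}$, computed from the true $\alpha_k,\beta_k$ with the $\varepsilon_k$ included, is that no approximation error is ever discarded: at every finite stage one has the exact identity $\theta=\alpha_1+\beta_1(\cdots+\beta_n\theta_{n+1})$, and since $\beta_j\le(1+\delta)/M<1/2$ the tail $\big(\prod_{j\le n}\beta_j\big)\theta_{n+1}$ decays geometrically to $0$; hence $\lambda=\theta$ exactly.

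The step needing care — and the main obstacle — is the interaction of two constraints: keeping the remainder in the representable range while never violating the digit bound $|D_k|\le M-W_k$ forced by $W_k\ge 2$, i.e.\ $|D_k|\le M-2$. Nearest-integer rounding only guarantees $|D_k|\le M|\theta_k|+1/2$, which exceeds $M-2$ exactly when $|\theta_k|$ is near $1$. I would resolve this by carrying the quantitative invariant $|\theta_k|\le 1-3/M$: this keeps $|\theta_{k+1}|\le 1/2\le 1-3/M$ (so the invariant propagates for $M\ge 6$) and forces $|D_k|\le M-5/2<M-2$, a margin that survives the $\varepsilon_k$-perturbation of the rounding once $\delta$ is small; the invariant holds initially provided $M\ge 3/(1-|\theta|)$, which is where the hypothesis $|\theta|<1$ enters. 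This attains every $\theta$ with $|\theta|<1$. The endpoints $|\theta|=1$ need not come from this machinery: the directions of slope $\pm1$, corresponding to $\lambda=\pm1$, are already realized by the elementary examples $\varphi=\sigma^{\mp1}$ on the full shift described in the Introduction, so it suffices to treat $|\theta|<1$, completing the proof.
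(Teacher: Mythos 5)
Your proof is correct and takes essentially the same route as the paper: both run a greedy, digit-by-digit choice of $(D_k,W_k)$ with \emph{exact} remainder feedback, taking $B_k$ large at each stage so the error $\varepsilon_k$ is absorbed into the exactly-computed remainder, and concluding $\lambda=\theta$ from the geometric decay forced by $\beta_k<1/2$ (the paper phrases this as nested intervals with a variable base $|D_k|+W_k+1$, you as a fixed-base-$M$ expansion with nearest-integer rounding and the invariant $|\theta_k|\le 1-3/M$; these are cosmetic differences). Your endpoint caveat is in fact sharper than the paper: since $|\alpha_1|+\beta_1<1$ whenever $W_1\ge2$, the values $\lambda=\pm1$ are genuinely unattainable by the construction (consistent with Theorem \ref{thm:1-over-lambda-is-unique-nonexpansive-direction}, which asserts $\lambda\in(-1,1)$), so the lemma really holds only for $|\theta|<1$, which is also all the paper's own proof covers. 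One small quibble with your patch for $|\theta|=1$: the example $\varphi=\sigma^{\mp1}$ makes the subaction along the slope-$\pm1$ line trivial, which is exactly what the main theorem forbids for rational directions; the paper instead recovers these (and all remaining) directions by reparametrizing the acting group applied to nontrivially realized directions, and that is the fix you should cite.
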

\begin{proof}
We assume for convenience that $\theta\geq0$; the cast $\theta<0$
follows similarly, the only difference being that all the $D$'s are
then negative and the endpoints of segments must appear in the reverse
order.

For some pair of integers $W\geq1$ and $D\geq0$, the number $\theta$
lies between $\frac{D}{|D|+W+1}$ and $\frac{D+1}{|D|+W+1}$. Taking
$W_{1}=W,D_{1}=D$ and taking $B_{1}$ to be sufficiently large, we
can make the error $\varepsilon_{k}$ arbitrarily small and obtain
\[
\theta\in(\alpha_{1},\alpha_{1}+\beta_{1})\]
One proceeds inductively to choose $B_{2},W_{2},D_{2}$ so that\[
\frac{\theta-\alpha_{1}}{\beta_{1}}\in(\alpha_{2},\alpha_{2}+\beta_{2})\]
implying that\[
\theta\in(\alpha_{1}+\beta_{1}\alpha_{2},\alpha_{1}+\beta_{1}(\alpha_{2}+\beta_{2}))\]
and so on (note that the closure of each interval is in the interior
of the previous one).
\end{proof}
Theorem \ref{thm:1-over-lambda-is-unique-nonexpansive-direction}
and lemma \ref{lem:any-value-of-lambda-is-possible} show that any
line with slope $\theta$, $|\theta|>1$, occurs as the unique non-expansive
direction of some action. All other directions can be attained from
this result by re-parametrizing the acting group. This completes the
proof of the main part of theorem \ref{thm:main}.

It remains only to show that in the case of a rational direction the
action in that direction is non-trivial. This is shown in the same
way as the proof of claim \ref{cla:Z-is-infinite}, that $Z$ is infinite;
we omit the details.

\section{\label{sec:Lyapunov-exponents}Lyapunov exponents}

Given a subshift $Y$ and an endomorphism $\varphi:Y\rightarrow Y$,
Shereshevsky \cite{Shereshevsky92} defined the Lyapunov exponents
$\lambda^{+},\lambda^{-}$ as follows. For $y\in Y$ define \[
I_{t}^{+}(y)=\min\left\{ n\,:\,\forall z\in Y\;\forall0\leq s\leq t\quad\left(z|_{[-n,\infty)}=y|_{[-n,\infty)}\quad\implies\quad(\varphi^{s}z)|_{[0,\infty)}=(\varphi^{s}y)|_{[0,\infty)}\right)\right\} \]
and similarly\[
I_{t}^{-}(y)=\min\left\{ n\,:\,\forall z\in Y\;\forall0\leq s\leq t\quad\left(z|_{(-\infty,n]}=y|_{(-\infty,n]}\quad\implies\quad(\varphi^{t}z)|_{(-\infty,0]}=(\varphi^{t}y)|_{(-\infty,0]}\right)\right\} \]
Set \[
\Lambda_{t}^{\pm}=\max_{y\in Y}\max_{i\in\mathbb{Z}}I_{t}^{\pm}(\sigma^{i}y)\]
The Lyapunov exponents are then defined by \[
\lambda^{\pm}=\liminf_{t\rightarrow\infty}\frac{\Lambda_{t}^{\pm}}{t}\]
 (Shereshevsky's original definition of $\lambda^{\pm}$ differs from
the above but is equivalent by \cite{Tisseur2000}). 

We omit the proof of the following, which is an immediate consequence
of the definitions:
\begin{prop}
$\varphi:Y\rightarrow Y$ be an automorphism of an infinite subshift
$Y$ and let $\Lambda$ be a prediction shape for $\varphi|_{Y}$.
Let $\theta^{+},\theta^{-}$ denote the (possibly infinite) slopes
of the right- and left-tangent rays to $\partial\Lambda$ at $(-1,0)$
and $(1,0)$, respectively. Then $\lambda^{+}\leq1/\theta^{+}$ and
$\lambda^{-}\leq-1/\theta^{-}$.\end{prop}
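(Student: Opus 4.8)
The plan is to unwind the definitions of $I_t^{\pm}$ and $\Lambda_t^{\pm}$ and show directly that the prediction shape $\Lambda$ forces the claimed linear upper bounds on the information-propagation widths, from which the inequalities for $\lambda^{\pm}$ follow by taking the appropriate $\liminf$. The geometric intuition is that $\theta^+$ is the slope at the right endpoint $(-1,0)$ of $\partial\Lambda$, so a point at height $t$ on (or inside) the upper boundary has horizontal coordinate growing like $-1 + t/\theta^+$; since prediction shapes are rescaled by $n$, this controls how far to the left one must fix coordinates of $y$ in order to determine $(\varphi^s y)_0$ for all $0\le s\le t$.

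Concretely, I would first fix $\varepsilon>0$ and use convexity of $\Lambda$ together with the definition of $\theta^+$ as the right-tangent slope at $(-1,0)$ to produce a compact set $\Lambda_0\subseteq\Lambda$ whose upper-left boundary has slope close to $\theta^+$; the key quantitative statement is that for each large $t$ the segment $\{(i,s): 0\le s\le t\}$ with $i$ slightly larger than $-(1/\theta^+ + \varepsilon)\,t$ lies inside $n\Lambda_0$ for a suitable scale $n$ comparable to $t$. Then the defining property of the prediction shape $\Lambda$ says: if $y|_{[-n,n]}=z|_{[-n,n]}$ then $(\varphi^s y)_i=(\varphi^s z)_i$ for all $(i,s)\in n\Lambda_0\cap\mathbb{Z}^2$. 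Matching this against the definition of $I_t^+(y)$ — the least $n$ such that agreement on $[-n,\infty)$ forces agreement of $(\varphi^s\cdot)|_{[0,\infty)}$ for $0\le s\le t$ — gives $I_t^+(y)\le (1/\theta^+ + \varepsilon)\,t + O(1)$, uniformly in $y$ and in $\sigma$-translates (by shift-equivariance of $\varphi$ and translation-invariance of the prediction shape). Dividing by $t$ and letting $t\to\infty$ and then $\varepsilon\to 0$ yields $\lambda^+\le 1/\theta^+$. The estimate for $\lambda^-$ is entirely symmetric, using the left-tangent slope $\theta^-$ at $(1,0)$ and the reflected definition of $I_t^-$.

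The main obstacle I anticipate is purely bookkeeping rather than conceptual: carefully reconciling the one-sided, half-line formulation of $I_t^{\pm}$ (agreement on a ray $[-n,\infty)$ controlling a ray $[0,\infty)$) with the two-sided, finite-block formulation built into Definition~\ref{def:prediction-shapes} (agreement on $[-n,n]$ controlling $n\Lambda_0\cap\mathbb{Z}^2$). The reconciliation works because the relevant part of the prediction region near $(-1,0)$ only ever constrains coordinates $i\ge 0$ at times $0\le s\le t$, so the right endpoint $n$ of the agreement interval is harmless and one genuinely only needs the left endpoint $-n$; the tangent slope $\theta^+$ at $(-1,0)$ is exactly what governs how $-n$ must scale with $t$. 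The handling of the additive $O(1)$ and the interchange of the two limits $t\to\infty$, $\varepsilon\to 0$ are routine, which is consistent with the paper's assertion that the proposition is an immediate consequence of the definitions; accordingly I would present the argument compactly and omit the full epsilon-management, as the authors do.
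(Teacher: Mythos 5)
The paper offers no proof to compare against (it declares the proposition an immediate consequence of the definitions), so the only question is whether your sketch is sound, and it has a genuine quantitative gap at exactly the point you dismiss as bookkeeping. When you slide agreement windows $[c-n,c+n]$ inside the known half-line $[-n',\infty)$, the columns you can predict up to time $t$ are $\{c+i_{0}\,:\,c\geq n-n'\}=[\,n-n'+i_{0},\infty)$, where $i_{0}$ is the position of your predicted segment relative to the window's \emph{center}. So what your key statement actually yields is $I_{t}^{+}\leq n+i_{0}+O(1)$: the controlling quantity is the distance from the \emph{left endpoint} $-n$ of the window to the predicted column, not the distance $|i_{0}|$ from the center. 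With your formulation ($i_{0}\approx-(1/\theta^{+}+\varepsilon)t$ and $n$ ``comparable to $t$'', say $n=Ct$) this gives $I_{t}^{+}\leq(C-1/\theta^{+}-\varepsilon)t$, which proves the proposition only if $C\leq2(1/\theta^{+}+\varepsilon)$. But with $n\leq2(1/\theta^{+}+\varepsilon)t$ your key statement is false in general: convexity gives only the one-sided bound that the left boundary of $\Lambda$ at height $h$ lies at $x\geq-1+h/\theta^{+}$ (the shape sits to the \emph{right} of its tangent ray), and at the definite relative height $t/n\geq\theta^{+}/(2(1+\varepsilon\theta^{+}))$ the boundary may be far to the right of that ray, or $\Lambda$ may not even reach that height. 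Concretely, if the left boundary of $\Lambda$ near $(-1,0)$ is $\{(-1+h^{2},h)\}$, then $\theta^{+}=\infty$ and the proposition asserts $\lambda^{+}\leq0$; but your segment at $i_{0}\approx-\varepsilon t$ lies in $n\Lambda_{0}$ only once $n\gtrsim t$, and the matching step then yields nothing better than $\lambda^{+}\leq1$.

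The repair is to tie the segment to the left endpoint and to take the scale \emph{much larger} than $t$ --- this is precisely how the tangent slope, rather than some secant slope, enters. Given $\varepsilon>0$, choose $h_{1}>0$ so small that the secant bound $l(h)<-1+h(1/\theta^{+}+\varepsilon)$ holds for all $0<h\leq h_{1}$, where $l(h)$ is the left boundary of $\Lambda$ at height $h$; then $p=(-1+h_{1}(1/\theta^{+}+\varepsilon),\,h_{1})\in\Lambda$, and by convexity a thin closed rectangle $\Lambda_{0}$ around the segment joining $(p_{1},0)$ to $p$ is a compact subset of $\Lambda$ (thickened so that $n\Lambda_{0}$ contains integer columns). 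Now take $n=\lceil t/h_{1}\rceil$, so the relative height of the segment in $\Lambda_{0}$ is at most $h_{1}$; the predicted column sits at distance $nh_{1}(1/\theta^{+}+\varepsilon)+O(1)=(1/\theta^{+}+\varepsilon)t+O(1)$ to the right of the window's left endpoint, uniformly in $y$ and in shifts, and sliding $c\geq n-n'$ gives $I_{t}^{+}\leq(1/\theta^{+}+\varepsilon)t+O(1)$, hence $\lambda^{+}\leq1/\theta^{+}$ after $\varepsilon\to0$; the case of $\lambda^{-}$ is symmetric. Note that the window width $n\approx t/h_{1}$ is then enormously larger than the final answer $(1/\theta^{+}+\varepsilon)t$; this is harmless for exactly the reason you identified --- only the left endpoint matters once the window may slide rightward --- but that observation must be combined with the choice $n\gg t$, and your version, which pins the segment to the center and keeps $n\asymp t$ with an unspecified constant, is where the argument breaks.
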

\begin{cor}
If the strip $\Lambda=\{(x,y)\,:\,|x|<1\}$ is a prediction shape
for $\varphi|_{Y}$, then $\lambda^{+}=\lambda^{-}=0$. 
\end{cor}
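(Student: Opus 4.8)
The plan is to derive the corollary directly from the proposition by computing the tangent slopes for the given strip $\Lambda$. Since the strip $\Lambda=\{(x,y):|x|<1\}$ has vertical boundary lines $x=-1$ and $x=1$, the relevant right-tangent ray to $\partial\Lambda$ at $(-1,0)$ and the left-tangent ray at $(1,0)$ both run along these vertical segments. A vertical ray has infinite slope, so $\theta^{+}=\theta^{-}=\infty$.

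Next I would substitute these values into the conclusion of the proposition. The proposition gives $\lambda^{+}\leq 1/\theta^{+}$ and $\lambda^{-}\leq -1/\theta^{-}$, and with $\theta^{\pm}=\infty$ these read $\lambda^{+}\leq 0$ and $\lambda^{-}\leq 0$. To conclude equality with zero, I would invoke the fact that the Lyapunov exponents $\lambda^{+},\lambda^{-}$ are nonnegative by construction: the quantities $I_{t}^{\pm}(y)$ and hence $\Lambda_{t}^{\pm}$ are nonnegative (they are minima over a set of nonnegative integers $n$, and the relevant set is never forced below zero), so $\lambda^{\pm}=\liminf_{t\to\infty}\Lambda_{t}^{\pm}/t\geq 0$. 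Combining $0\leq\lambda^{\pm}\leq 0$ forces $\lambda^{+}=\lambda^{-}=0$.

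The only subtlety I anticipate — and the step I would treat most carefully — is confirming that the strip genuinely qualifies as a prediction shape in the sense of Definition \ref{def:prediction-shapes}, namely that it is convex, open, and contains $(0,1)\times\{0\}$; all three are immediate for $\{|x|<1\}$, so this is a formality rather than a genuine obstacle. The more conceptual point worth stating clearly is the \emph{geometric interpretation}: a prediction shape that is an infinite vertical strip means that a fixed-width window $y|_{[-n,n]}$ determines the image coordinate $(\varphi^{t}y)_{i}$ for all times $t$ as long as $|i|<n$, with no shrinking of the determined index-window as $t$ grows. This is exactly the statement that information propagates sublinearly, i.e. with asymptotic speed zero, which is the content of vanishing Lyapunov exponents. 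Since the proposition has already packaged the quantitative link between tangent slopes and exponents, the corollary is essentially an observation, and I would present it as a short two-sentence deduction rather than a separate argument.
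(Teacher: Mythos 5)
Your proposal is correct and matches the paper's intent exactly: the paper states this corollary without proof as an immediate consequence of the preceding proposition, and your deduction (vertical tangents give $\theta^{\pm}=\infty$, hence $\lambda^{\pm}\leq 0$, combined with nonnegativity of the exponents) is precisely that intended one-line argument. The only caveat is that your parenthetical justification of $\lambda^{\pm}\geq 0$ is a bit loose --- for an arbitrary (finite) subshift the minimizing $n$ in $I_{t}^{\pm}$ could be negative, and the clean reason is that an \emph{infinite} subshift (the proposition's standing hypothesis) always admits distinct points agreeing on a half-line, forcing $\Lambda_{t}^{\pm}\geq 0$ --- but this does not affect the validity of the deduction in context.
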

Tisseur \cite{Tisseur2000} and later Tisseur and Bressaud \cite{BressaudTisseur2007}
studied the relation between Lyapunov exponents, particularly the
case of zero Lyapunov exponent, and the existence of equicontinuity
points for the action of $\varphi$. Let us recall some definitions.
For an endomorphism $\varphi$ acting on a subshift $Y\subseteq\Sigma^{\mathbb{Z}}$,
we say that a finite word $a\in\Sigma^{n}$ is a \emph{blocking word
}if, for any pair $y,z\in Y$ with $y|_{[1,n]}=z|_{[1,n]}=a$ and
$y|_{[1,\infty)}=z|_{[1,\infty)}$, we also have $(\varphi^{t}y)|_{[1,\infty)}=(\varphi^{t}z)|_{[1,\infty)}$
for all $t$; and also for any $y,z$ satisfying $y|_{[-n,-1]}=z|_{[-n,-1]}=a$
and $y|_{(-\infty,-1]}=z|_{(-\infty,-1]}$ we have $(\varphi^{t}y)|_{(-\infty,-1]}=(\varphi^{t}z)|_{(-\infty,-1]}$.
The condition that $\varphi$ have equicontinuity points is equivalent
to $\varphi$ having a blocking word. Also, not having equicontinuity
points is equivalent to $\varphi$ acting on $Y$ with sensitive dependence
on initial conditions.

Returning to the matter at hand, Bressaud and Tisseur conjectured
that when $\varphi$ is a cellular automaton acting sensitively (i.e.
without equicontinuity points) on $Y$, some point $y\in Y$ has $\liminf_{t\rightarrow\infty}I_{t}^{+}(y)>0$
or $\liminf_{t\rightarrow\infty}I_{t}^{-}(y)>0$ (\cite[Conjecture 3]{BressaudTisseur2007}).
Our construction provides a counterexample for this conjecture, as
we describe next. We continue to use the notation introduced in the
previous section during the construction of $Z$. 
\begin{prop}
\label{pro:Z-is-counterexample}The action of $\pi$ on $Z$ does
not have equicontinuity points.\end{prop}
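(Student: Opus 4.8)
The plan is to show that $\pi$ has no equicontinuity points on $Z$ by establishing that $\pi|_Z$ has no blocking word, using the characterization recalled just above the statement. Equivalently, and perhaps more directly, I would argue that no finite window of a configuration $z \in Z$ determines its $\pi$-orbit on a half-line, by exhibiting, for any prescribed window $[-n,n]$, two points of $Z$ agreeing on that window whose $\pi$-images eventually disagree arbitrarily far out. The key structural input is the recursive simulation identity \eqref{eq:Z-recursion}, $Z = Z_1(Z_2^\infty)$, together with the prediction-shape analysis of the previous section: the prediction shape of $\pi|_Z$ is the unit-width strip $\ell^1$ around a line $\ell$ of finite slope $1/\lambda$ with $|\lambda| < 1$. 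Since $\ell$ is not vertical, the strip $\ell^1$ does not contain any set of the form $[-1,1]\times\{0\}$ in its interior; more to the point, information fixed in a bounded spatial window propagates under $\pi$ only within a region asymptotic to $\ell^1$, so uncertainty at a single site spreads to spatial distance tending to infinity as $t \to \infty$.

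First I would make precise the link between the prediction shape and sensitivity. Having a blocking word would force the existence of a prediction shape containing a full horizontal segment $[-1,1]\times\{0\}$ in its interior (a blocking word confines the influence of the two half-lines to the corresponding half-lines for all time, which translates into a prediction shape whose boundary is vertical or steeper at the points $(\pm 1,0)$). But the computation in Section~\ref{sec:Realizing-unique-non-expansive-directions} shows the tangent slopes of $\partial\Lambda$ at $(\pm 1,0)$ are both equal to $1/\lambda$, a finite nonzero slope, so the strip is genuinely slanted and cannot contain such a horizontal segment in its interior. This already rules out blocking words and hence equicontinuity points. So the cleanest route is: (i) recall that equicontinuity points are equivalent to existence of a blocking word; (ii) show a blocking word would yield a prediction shape strictly containing $[-1,1]\times\{0\}$; (iii) invoke the fact, noted right after Definition~\ref{def:prediction-shapes}, that for infinite subshifts no prediction shape contains $[-1,1]\times\{0\}$ in its interior, and that $Z$ is infinite by Claim~\ref{cla:Z-is-infinite}.

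Alternatively, and more explicitly in the spirit of the construction, I would build the disagreeing pair by hand. Given $n$, choose $k$ so large that the block length $B_k$ at level $k$ exceeds $n$; then points of $Z = Z_1(Z_2(\cdots Z_k(X)\cdots))$ whose level-$k$ data encodes sequences of $Y = Z_{k+1}^\infty$ differing only far away agree on $[-n,n]$ (the window sees only the fixed program and block layers plus a synchronized state, identical for both), yet under sufficiently many applications of $\pi$ the simulated automorphism $\varphi$ at the top of the hierarchy eventually transports a difference into the observed region. The slanted, finite-slope geometry of the prediction strip guarantees that although this propagation is unbounded it occurs at a sublinear rate, which is exactly the phenomenon of sensitivity without a blocking word.

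The main obstacle I anticipate is step (ii): turning ``no blocking word'' cleanly into a statement about prediction shapes, since blocking words concern one-sided determination on half-lines for \emph{all} $t$, whereas prediction shapes are two-sided and asymptotic in $n$. I would handle this by contraposition—assuming a blocking word $a$ of length $m$, one shows that once $a$ appears the pair $y,z$ remains indistinguishable on a half-line under all $\pi^t$, which forces the influence region of a central window to be confined to a set whose boundary is vertical at $(\pm 1,0)$, contradicting the computed finite tangent slope $1/\lambda$. The bookkeeping of translating half-line determination into the correct containment of $[-1,1]\times\{0\}$ in the interior of a prediction shape is the delicate part; once that is in place the contradiction with the infinite-subshift remark is immediate.
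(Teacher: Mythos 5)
Your primary route --- deducing absence of equicontinuity points from the prediction-shape geometry --- has a genuine gap at step (ii), and the gap is not a bookkeeping issue but a directional one. A prediction shape is an \emph{upper} bound on information propagation: it asserts that agreement on a window forces agreement on a region, and it says nothing whatsoever about where disagreement must appear. Sensitivity (no equicontinuity points) is a \emph{lower} bound: differences must actually travel. No upper-bound statement can yield it. Concretely, your claimed implication ``blocking word $\implies$ some prediction shape contains $[-1,1]\times\{0\}$ in its interior'' is false: take $\varphi=\id$ on the full shift. Every word is a blocking word and every point is an equicontinuity point, yet the subshift is infinite, so by the remark following Definition \ref{def:prediction-shapes} no prediction shape contains $[-1,1]\times\{0\}$ in its interior (the largest prediction shape here is exactly the vertical strip $\{|x|<1\}$). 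Thus a ``slanted strip'' prediction shape for $\pi|_{Z}$ is perfectly compatible, a priori, with $\pi$ having equicontinuity points, and steps (i)--(iii) cannot close the argument.

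Your alternative route is essentially the paper's proof, and it is the one that works: assume $x$ is an equicontinuity point with window $[-n,n]$, choose $k$ so large that the window determines none of (more precisely, only part of) the level-$k$ data of the one or two level-$k$ blocks it meets --- note this requires the cumulative block length $B_{1}B_{2}\cdots B_{k}$, not just $B_{k}$, to dwarf $n$ --- and alter that data outside the window to produce $y\in Z$ with $y|_{[-n,n]}=x|_{[-n,n]}$. But the crucial final step, that the planted difference eventually reaches coordinate $0$, you again justify by ``the slanted, finite-slope geometry of the prediction strip,'' which by the above cannot do this job. What actually delivers it is the mechanics of the construction: in each level-$k$ cycle the simulation \emph{physically transports} the level-$k$ data words across the block (the transmission caravans and the shifting stages), and this transport is visible at the bottom level as changes of actual cell contents; so the differing bit, planted in the very level-$k$ block(s) containing the window, is carried through cell $0$ at some finite time $t$, giving $\pi^{t}x(0)\neq\pi^{t}y(0)$. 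This is also why the difference should be planted in the level-$k$ blocks intersecting the window rather than ``far away'' as you propose: for a far-away difference one would first have to prove that the simulated dynamics propagate it inward at all, which is again a lower-bound statement requiring its own argument.
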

\begin{proof}
Suppose that $x$ were an equicontinuity point. Then there is an $n$
so that if $x(i)=y(i)$ for $|i|\leq n$ and $y\in Z$ then $\pi^{t}x(0)=\pi^{t}y(0)$
for all $t$. We show this is impossible.

Consider the block $a=x|_{[-1,1]}$. This block contains certain information
about the configuration of the one or two blocks in $Z$ which intersect
the coordinates $[-n,n]$. This in turn may encode some information
about the location and state of the one or two blocks in $Z_{2}^{\infty}$
whose state are encoded in the two blocks in the first level of $Z$;
and so on. But since $a$ is a finite block, there must be a $k$
so that $a$ contains no information about the blocks whose encoding
it intersects in $Z_{k}^{\infty}$. Now note that we can choose $y$
so that the state of these $Z_{k}^{\infty}$-blocks in $y$ differs
from their state in $x$. Thus there is some bit in these $Z_{k}^{\infty}$-blocks
that differs. This bit will eventually be transported across $a$;
therefore there will be a $t$ so that $\pi^{t}y(0)\neq\pi^{t}x(0)$.
\end{proof}
This last proposition may seem surprising, since the dynamics of $(Z,\pi)$
appear at first glance to be almost periodic. However, it is not in
reality so.

\section{\label{sec:alternative-construction}An alternative construction
in the rational case}

We present here a construction that arose in discussions with Doug
Lind and provides a simpler example of a system whose unique non-expansive
direction is the vertical axis, but no power of the action in that
direction is the identity (though in other ways the action is dynamically
rather trivial). This example is significantly simpler than the one
above and may be adapted to give examples in other rational directions,
but we have been unable to get any irrational direction with this
method. It is striking to us that the irrational case is so much more
difficult than the rational one, and it would be interesting if a
simpler construction for that case were found. 

As before, we construct a subshift $X\subseteq\Sigma^{\mathbb{Z}}$
and an automorphism $\pi:X\rightarrow X$ such that the vertical strip
of width $2$ around the $y$-axis is a prediction shape for $\pi$. 

Fix a parameter $n\in\mathbb{N}$. The alphabet $\Sigma$ consists
of the symbols\begin{eqnarray*}
- &  & \mbox{(blank)}\\
\rightarrow,\leftarrow &  & \mbox{(arrows)}\\
\overset{k}{[},\overset{k}{]} & \mbox{for }0\leq k\leq n & \mbox{(brackets, with counter }k\mbox{)}\\
\overset{k}{\underset{*}{[}},\overset{k}{\underset{*}{]}} & \mbox{for }0\leq k\leq n-1 & \mbox{(marked brackets, with counter }k\mbox{)}\end{eqnarray*}
Each configuration in $X$ will have at most one arrow symbol in it;
the rest will be blanks and brackets. Adjacent brackets will not be
allowed, instead between any pair of brackets there will always be
at least one blank or arrow symbol. 

We shall later describe the configurations of $X$ in more detail,
but first we define the automorphism $\pi$ by giving the relevant
transitions, from which a range-2 block code may be derived. The transitions
are\begin{eqnarray}
\rightarrow- & \mbox{becomes} & -\rightarrow\label{eq:joe-1}\\
\rightarrow\overset{n}{[}- & \mbox{becomes} & -\overset{n-1}{\underset{*}{[}}\rightarrow\label{eq:joe-2}\\
\rightarrow\overset{k}{]}- & \mbox{becomes} & \leftarrow\overset{k-1}{]}-\mbox{ if }k>0\label{eq:joe-3}\\
\rightarrow\overset{0}{]}- & \mbox{becomes} & -\overset{n}{]}\rightarrow\label{eq:joe-4}\\
-\overset{k}{\underset{*}{[}}\leftarrow & \mbox{becomse} & -\overset{k-1}{\underset{*}{[}}\rightarrow\mbox{ if }k>0\label{eq:joe-5}\\
-\overset{0}{\underset{*}{[}}\leftarrow & \mbox{becomes} & -\overset{n}{[}\rightarrow\label{eq:joe-6}\end{eqnarray}
together with the symmetric rules obtained by reversing left and right,
e.g. $-\overset{0}{[}\leftarrow$ becomes $\leftarrow\overset{n}{[}-$
(reversal of \eqref{eq:joe-4}).

To interpret this, one may imagine that the arrow represents an agent
walking in a landscape of brackets, proceeding in the direction the
arrow points to (rule \eqref{eq:joe-1}). The behavior of the agent
when it encounters a bracket depends on the orientation and the data
on the bracket. 
\begin{itemize}
\item When the agent approaches a bracket from the {}``inside'',

\begin{itemize}
\item If the bracket has positive counter, the agent decrements the counter
by $1$ and turns around (rule \eqref{eq:joe-3},\eqref{eq:joe-5}). 
\item If the counter is $0$ then it is reset to $n$; next, if the bracket
was marked the agent removes the mark and turns around, but if it
was unmarked the agent passes through (rule \eqref{eq:joe-4},\eqref{eq:joe-6}).
\end{itemize}
\item When the agent approaches a bracket from the {}``outside'' it marks
the bracket, decrements the counter, and passes through (rule \ref{eq:joe-2}).
We shall arrange that such an encounter only occurs when the counter
is $n$ and the bracket is unmarked. 
\end{itemize}
For example, starting from the pattern $\rightarrow\overset{n}{[}-\overset{n}{]}-$
the agent will {}``enter'' the region between the brackets, reverse
its direction $2n$ times, and emerge from the right side. Note that
upon its exit it leaves behind the configuration as he found it, i.e.
the final pattern is $-\overset{n}{[}-\overset{n}{]}\rightarrow$. 

We next describe the allowable arrangement of brackets in $X$. We
first define special sequences of {}``plain'' brackets $[,]$ which
are arranged in a hierarchical manner. Begin by choosing a periodic
subset $I_{1}\subseteq\mathbb{Z}$ of period $2$ (there are two ways
to do this), and set the symbols $y_{i},i\in I$ to be alternately
$[$ and $]$ (this can again be done in two ways). Half of the symbols
in $\mathbb{Z}\setminus I_{1}$ are now trapped between matching brackets;
let $I'_{2}$ denote the half which is not, which is a coset of $4\mathbb{Z}$.
Next, choose a subset $I_{2}\subseteq I'_{2}$ of relative period
2 (a coset of $8\mathbb{Z}$; there are two choices) and define $y_{i},i\in I_{2}$
to be alternately $[$ and $]$ (again two choices). Let $I'_{3}\subseteq\mathbb{Z}\setminus(I_{1}\cup I_{2})$
be those indices which have not yet been determined, and which are
not trapped between matching brackets, and choose $I_{3}\subseteq I'_{3}$
a subset of relative period $2$ (a coset of $32\mathbb{Z}$). Proceed
in this manner to define $y_{i}$ for $i\in I_{3}$ and $I'_{4},I_{4}$,
etc. After carrying this out for all $n$ every $i\in\mathbb{Z}$,
with possibly one exception, is trapped between some pair of brackets;
the remaining point, if it exists, may be left blank or given the
symbol $[$ or $]$. Three steps in the construction of such a $y$
appears below (big brackets indicate the addition at each stage).\[
\begin{array}{c}
-[-]-[-]-[-]-[-]-[-]-[-]-[-]-[-]-[-]-[-]-[-]-[-]-[-]-\\
\bigg]\,[-]-[-]\;\bigg[\;[-]-[-]\;\bigg]\;[-]-[-]\;\bigg[\;[-]-[-]\;\bigg]\;[-]-[-]\;\bigg[\;[-]-[-]\;\bigg]\;[-]-\\
]\;[-]\;\bigg[\;[-]\;[\;[-]-[-]\;\,]\;\,[-]-[-]\;\,[\;\,[-]-[-]\;\,]\;\,[-]\;\bigg]\;[-]\;[\;[-]-[-]\;]\;[-]-\end{array}\]

We define a pre-block to be a subword of an hierarchical arrangement
as above, which consists of a matched pair of brackets and the region
between them. The pattern $[-]$ is a pre-block, and we call it the
level-0 pre-block; next is $[[-]-[-]]$, a level-1 pre-block; in general,
a pre-block containing level-$n$ pre-blocks but no level-$(n+1)$
pre-block is a level-$(n+1)$ pre-block.

Next, define a level-$n$ block to be the word obtained from a level-$n$
pre-block by inserting a blank in between every pair of symbols. 

We now define the admissible words in $X$. Let $a$ be a block, and
consider the patterns $\rightarrow a-$ and $-a\leftarrow$, which
we extend with blanks in both directions (but we suppress these blanks
notationally). It is easy to verify that after finitely many iterations
of $\pi$ we get the patterns the $-a\rightarrow$ and $\leftarrow a-$,
respectively. Let $L(a)$ denote the set of intermediate patterns
obtained in this way. We define $X$ to be the subshift such that
every finite word in $X$ appears as a subword of some $b\in L(a)$,
for some block $a$. 

It is not hard to check that for each $x\in X$ there is a coset of
$2\mathbb{Z}$ on which there appears a hierarchical configuration
of brackets in the sense above, with brackets now carrying counters
and markings. On the complementary coset there appear only blanks
and possibly an arrow. One can also verify that if $x\in X$ contains
a block that does not contain an arrow, then that block consists of
unmarked brackets with counters equal to $n$. 

The point of the construction is the following. The changes that occur
in a configuration under $\pi$ occur only at the site of an arrow
or adjacent to an arrow, so in order to understand the propagation
of perturbations to a configuration under $\pi$ we must understand
is the rate at which the arrow moves. For this, note that in order
to {}``pass through'' the block \[
a_{1}=\overset{n}{[}---\overset{n}{]}\]
 requires $6n$ steps. Now consider the block \[
a_{2}=\overset{n}{[}-\overset{n}{[}---\overset{n}{]}---\overset{n}{[}---\overset{n}{]}-\overset{n}{]}\]
To pass through this requires the arrow to go back and forth $2n$
times between the external brackets; each time it must cross the inner
two brackets twice, taking $6n$ steps each time. Thus to cross $a_{2}$
requires $2n\cdot(5+2\cdot6n)$. 

Continuing in this way, one may show that the time to cross the level-$n$
block $a_{k}$ is $(c_{n})^{k}$, where $c_{n}\rightarrow\infty$
with $n$. On the other hand, the width of $a_{k}$ is $d^{k}$ for
a constant $d$ independent of $n$, and the blocks $a_{k}$ appear
periodically with period $d^{k}$ in any configuration of $X$. It
follows that in order to travel a distance of $Nd^{k}$ will require
time on the order of $N(c_{n})^{k}$, i.e. over large scales the rate
of travel is logarithmic (we remark that this is the slowest possible
rate; if the rate were sub-logarithmic we would have, counting configurations,
that the action of $\pi$ were periodic).

In particular, if we know the configuration $x|_{[-N,N]}$ for $x\in X$
we can predict $x|_{[-N+\log N,N-\log N]}$ up to time $O(N)$. It
follows that the vertical strip of width $2$ is a prediction shape
for $X$, as desired. We omit the details.

Finally, since the arrow does travel arbitrarily far in some configurations
(in fact, in any configuration containing the arrow), it follows that
the action of $\pi$ is not periodic. We remark, however, that the
dynamics of $\pi$ are in other ways rather trivial, e.g. all invariant
measures are concentrated on fixed points, and there are uncountable
many of these (the configurations without an arrow). 

\bibliographystyle{plain}
\bibliography{non-expansive-directions}

\begin{thebibliography}{10}

\bibitem{AlbertCulik1987}
J{\"u}rgen Albert and Karel Culik, II.
\newblock A simple universal cellular automaton and its one-way and totalistic
  version.
\newblock {\em Complex Systems}, 1(1):1--16, 1987.

\bibitem{Boyle2008}
Mike Boyle.
\newblock Open problems in symbolic dynamics.
\newblock {\em to appear in Contemporary Mathematics}, 2008.
\newblock http://www.math.umd.edu/~mmb/papers/openfinalsub2nov2008.pdf.

\bibitem{BoyleLind97}
Mike Boyle and Douglas Lind.
\newblock Expansive subdynamics.
\newblock {\em Trans. Amer. Math. Soc.}, 349(1):55--102, 1997.

\bibitem{BressaudTisseur2007}
Xavier Bressaud and Pierre Tisseur.
\newblock On a zero speed sensitive cellular automaton.
\newblock {\em Nonlinearity}, 20(1):1--19, 2007.

\bibitem{Gacs2001}
Peter G{\'a}cs.
\newblock Reliable cellular automata with self-organization.
\newblock {\em J. Statist. Phys.}, 103(1-2):45--267, 2001.

\bibitem{Madden2000}
K.~M. Madden.
\newblock A single nonexpansive, nonperiodic rational direction.
\newblock {\em Complex Systems}, 12(2):253--260, 2000.

\bibitem{MoritaHarao89}
K~Morita and M~Harao.
\newblock Computation universality of one-dimensional reversible (injective)
  cellular automata.
\newblock {\em IEICE Trans. Inf. \& Syst.}, 72:758--762, 1989.

\bibitem{Shereshevsky92}
M.~A. Shereshevsky.
\newblock Lyapunov exponents for one-dimensional cellular automata.
\newblock {\em J. Nonlinear Sci.}, 2(1):1--8, 1992.

\bibitem{Tisseur2000}
P.~Tisseur.
\newblock Cellular automata and {L}yapunov exponents.
\newblock {\em Nonlinearity}, 13(5):1547--1560, 2000.

\bibitem{Walters82}
Peter Walters.
\newblock {\em An introduction to ergodic theory}, volume~79 of {\em Graduate
  Texts in Mathematics}.
\newblock Springer-Verlag, New York, 1982.

\end{thebibliography}

\end{document}